\documentclass[UKenglish]{article}
\usepackage{microtype}
\bibliographystyle{plainurl}




\usepackage{macros,stmaryrd,hyperref,tabularx,etex,amsthm, amsmath,amssymb}
\usepackage[inline]{enumitem}

\newtheorem{theorem}{Theorem}
\newtheorem{proposition}[theorem]{Proposition}
\newtheorem{lemma}[theorem]{Lemma}
\newtheorem{definition}[theorem]{Definition}
\newtheorem{corollary}[theorem]{Corollary}

\newcommand{\david}[1]{}
\newcommand{\joseph}[1]{}



\usepackage{authblk}

\newcommand{\fullproof}[1]{{#1}}
\newcommand{\shortproof}[1]{}

\makeatletter
\newcommand*{\ineq}[2][]{%
  \begingroup
    \refstepcounter{equation}%
    \ifx\\#1\\%
    \else
      \label{#1}%
    \fi
    \relpenalty=10000 %
    \binoppenalty=10000 %
    \@eqnnum \ \ensuremath{%
      #2%
    }%
  \endgroup
}
\makeatother

\newcommand{\Var}{{\mathbb P}}
\newcommand{\U}{U}
\newcommand{\fullmod}[2]{{#1}=(W_{#2},{\peq}_{#2},V_{#2})}
\newcommand{\fulllab}[2]{{#1}=(W_{#2},{\peq}_{#2},\labfun_{#2})}

\newcommand{\ur}[1]{{#1}_w}
\newcommand{\T}{{\mathcal T}}
\newcommand{\G }{{\mathcal G}}
\newcommand{\M }{{\mathcal M}}
\newcommand{\N }{{\mathcal N}}
\newcommand{\iM }{{\mathcal M}}

\newcommand{\newmodel}[1]{{#1}^{\rm mod}}
\newcommand{\A}{{\mathcal A}}
\newcommand{\iA}{{\mathcal A}}
\newcommand{\B }{{\mathcal B}}
\newcommand{\iB }{{\mathcal B}}
\newcommand{\dpt}[1]{{\rm dpt}(#1)}
\newcommand{\degp}[1]{\text{lvl}(#1)}
\newcommand{\Labels}{\Lambda}
\newcommand{\simvar}{\sigma}
\newcommand{\srone}{\rho}
\newcommand{\srtwo}{\iota}
\newcommand{\ignore}[1]{}
\newcommand{\peq}{\preccurlyeq}
\newcommand{\labfun}{\lambda}
\newcommand{\sfun}{\msuccfct}
\newcommand{\iltl}{\ensuremath{\sf ITL^e}}
\newcommand{\itlb}{\ensuremath{\sf ITL^p}}
\newcommand{\lang}{{\mathcal L}}
\newcommand{\rng}{{\rm rng}}
\newcommand{\imm}{\mathrel\unlhd}
\newcommand{\cond}{\ll}
\newcommand{\simm}{\mathrel\triangleq}
\newcommand{\btree}[2][\Labels]{{\widehat{\G}} ^{#1}_{#2}}


\newcommand{\sff}[1][\varphi_0]{\text{SF}\!\left(#1\right)}

\newcommand{\wsf}[1]{\Sigma(#1)}

\newcommand{\Edefect}{\mathcal{D}} 
\newcommand{\Qbase}[1][]{}  
\newcommand{\Qstrat}[1][]{^{{\str}\ifx#1\relax \relax\else, #1\fi}}
\newcommand{\str}{{\rm e }} 

\newcommand{\bgraph}[2][\Labels]{\G ^{#1}_{#2}}
\newcommand{\bgraphbase}[2][\Labels]{\bgraph[#1]{#2} = %
  \left(W^{#1}_{#2}, \isuccrel^{#1}_{#2}, \labfun^{#1}_{#2}\right)}

\usepackage{xypic}
\usepackage{graphicx}
\usepackage{multicol}
\def\lang{{\cal L}}
\newcommand{\eqdef}{\overset{def}{=}}


\begin{document}

\title{A Decidable Intuitionistic Temporal Logic}

\author[1]{Joseph Boudou}
\affil[1]{Institut de Recherche en Informatique de Toulouse, Toulouse University, France}
\author[1]{Mart\'in Di\'eguez}
\author[1]{David Fern\'andez-Duque}

\maketitle

\begin{abstract}
We introduce the logic $\iltl$, an intuitionistic temporal logic
based on structures $(W,\peq,\msuccfct)$, where $\peq$ is used to interpret intuitionistic implication and $\msuccfct$ is a $\peq$-monotone function used to interpret temporal modalities.
Our main result is that the satisfiability and validity problems for $\iltl$ are decidable.
We prove this by showing that the logic enjoys the strong finite model property.
In contrast, we also consider a `persistent' version of the logic, $\itlb$, whose models are similar to Cartesian products.
We prove that, unlike $\iltl$, $\itlb$ does not have the finite model property.
\end{abstract}

\section{Introduction}

Intuitionistic logic~\cite{DVD86,MInt}
and its modal extensions~\cite{Ewald,PS86,Simpson94}
play a crucial role in the area of computer science and artificial intelligence. For instance, Pearce's Equilibrium Logic~\cite{Pea96}, which characterises the Answer Set semantics~\cite{MT99,Niemela99} of logic programs (ASP), is defined in terms of the intermediate logic of Here and There~\cite{Hey30}, together with a minimisation criterion. Extensions of Here and There logic allowed the ASP paradigm, already used in a wide range of domains~\cite{FarinasDieguez,BV+2011,GG+10,I13,NBGWB01}, to be applied to reasoning about temporal or epistemic scenarios~\cite{CP07,CerroHS15} while satisfying the theorem of strong equivalence~\cite{CabalarD14,LPV01,CerroHS15}, central to logic programming and nonmonotonic reasoning.

Such modal extensions of Here and There logic are simple cases of a {\em modal intuitionistic logic;} in general, the study of such logics can be a challenging enterprise~\cite{Simpson94}. In particular, there is a huge gap that must be filled regarding combinations of intuitionistic and linear time temporal logic.
Nevertheless, there have been several efforts in this direction, including logics with `past' and `future' tenses \cite{Ewald}
or with `next' $\tnext$, 'eventually' $\diam$ and/or `henceforth' $\ubox$ modalities.
%
The main contributions to the field include the following:
\begin{itemize}\itemsep0pt	
	\item Davies' intuitionistic temporal logic with $\tnext$ \cite{Davies96} was provided Kripke semantics and a complete deductive system by Kojima and Igarashi \cite{KojimaNext}.	
	\item Logics with $\tnext,\ubox$ were axiomatized by Kamide and Wansing \cite{KamideBounded}, where $\ubox$ was interpreted over bounded time.	
	\item Nishimura \cite{NishimuraConstructivePDL} provided a sound and complete axiomatization for an intuitionistic variant of the propositional dynamic logic $\sf PDL$.	
  \item Balbiani and Di\'eguez~\cite{BalbianiDieguezJelia} axiomatized the Here and There variant of $\sf LTL$ with $\tnext,\diam,\ubox$.	
	\item Davoren \cite{Davoren2009} introduced topological semantics for temporal logics and Fern\'an\-dez-Duque \cite{DFD2016} proved the decidability of a logic with $\tnext,\diam$ and a universal modality based on topological semantics.		
\end{itemize}

With the exception of \cite{Davoren2009,DFD2016}, semantics for intuitionistic $\sf LTL$ use frames
of the form $(W,{\irel},{\msuccfct})$, where $\irel$ is a partial order used to interpret the
intuitionistic implication and $\msuccfct$ is a binary relation used to interpret temporal
operators.
Since we are interested in linear time, we will restrict our attention to the case where $\msuccfct$ is a function. Thus, for example, $\tnext p$ is true on some world $w\in W$ whenever $p$ is true on $\msuccfct(w)$.
Note, however, that $\msuccfct$ cannot be an arbitrary function. Intuitionistic semantics have the feature
that, for any formula $\varphi$ and worlds $w\irel v\in W$, if $\varphi$ is true on $w$ then it must
also be true of $v$; that is, truth is {\em monotone.} If we want this property to be preserved by
formulas involving $\tnext$, we need for $\peq$ and $\msuccfct$ to satisfy certain confluence
properties. In the literature, one generally considers frames satisfying
\begin{enumerate*}
\item \label{ItForCon}  $w\irel v$ implies $\msuccfct(w)\irel\msuccfct(v)$ ({\em forward confluence,} or simply {\em confluence}), and
\item\label{ItBackCon}  if $u\succcurlyeq\msuccfct(w)$, there is $v\succcurlyeq w$ such that $\msuccfct(v)=u$ ({\em backward confluence}).
\end{enumerate*}
We will call frames satisfying these conditions {\em persistent frames} (see Sec.~\ref{SecExpPer}),
mainly due to the fact that they are closely related to (persistent) products of modal logics
\cite{mdml}.
Persistent frames for intuitionistic $\sf LTL$ are the frames of the modal logic ${\sf S4}\times {\sf LTL}$, which is non-axiomatizable. For this reason, it may not be surprising that it is unknown whether the intuitionistic temporal logic of persistent frames, which we denote $\itlb$, is decidable.

However, as we will see in Proposition \ref{PropIntCond}, only forward confluence is needed for truth of all formulas to be monotone, even in the presence of $\diam$ and $\ubox$. The frames satisfying this condition are, instead, related to {\em expanding} products of modal logics \cite{pml}, which are often decidable even when the corresponding product is non-axiomatizable. This suggests that dropping the backwards confluence could also lead to a more manageable intuitionistic temporal logic. This logic, which we denote $\iltl$, is the focus of the present paper and, as we will prove in this paper, it enjoys a crucial advantage over $\itlb$: $\iltl$ has the strong finite model property (hence, it is decidable), but $\itlb$ does not. In fact, to the best of our knowledge, $\iltl$ is the first known decidable intuitionistic temporal logic that
\begin{enumerate*}
	\item is conservative over propositional intuitionistic logic,
	\item includes (or can define) the three modalities $\tnext,\diam,\ubox$, and
	\item is interpreted over infinite time.
\end{enumerate*}

\section{Syntax and semantics} \label{SecSynSem}

We will work in the language $\lang$ of $\sf LTL$ given by the following grammar:
\begin{equation*}
  \varphi, \psi \gramdef p \alt \bot \alt
    \varphi \wedge \psi \alt \varphi \vee \psi \alt \varphi \imp \psi \alt
    \tnext \varphi \alt \diam\varphi \alt \ubox\varphi,
\end{equation*}
where $p$ is an element of a countable set of propositional variables $\Var$. Given any formula $\varphi$,
we write $\sff[\varphi]$ for the set of subformulas of $\varphi$ 
and $\nos{\varphi}$ for the cardinality of $\sff[\varphi]$.

A {\em dynamic poset} is a tuple $(W,\peq,\sfun)$, where 
$W$ is a non-empty set of states,
$\irel$ is a partial order,
and
$\sfun$ is a function from $W$ to $W$ that satisfies the following \emph{(forward) confluence} condition:
\begin{equation}\label{EqConf}
\text{for all $w, v \in W$, if $w \irel v$ then $\msuccfct(w) \irel \msuccfct(v)$.}
\end{equation}
An {\em intuitionistic dynamic model,} or simply a {\em model,} is a tuple $\modelbase$ consisting of a dynamic poset equipped
with a valuation function $V$ from $W$ to sets of propositional variables satifying
the \emph{monotonicity} condition: 
\begin{equation}\label{EqVMon}
\text{for all $w, v \in W$, if $w \irel v$ then $V(w) \subseteq V(v)$.}
\end{equation}
In the standard way, we define $\msuccfct[^0](w) = w$ and,
for all $k > 0$, $\msuccfct[^k](w) = \msuccfct\left(\msuccfct[^{k-1}](w)\right)$. Then we define the satisfaction relation $\models$ inductively by:
\begin{alignat*}{4}
  \model, w &\sat p &~&\text{iff } p \in V(w) \\
  \model, w &\sat \tnext \varphi &~&\text{iff } \model, \msuccfct(w) \sat \varphi \\
  \model, w &\sat \bot &&\text{never} \\
  \model, w &\sat \diam \varphi &&\text{iff } \exists k \text{ s.t. } \model, \msuccfct[^k](w) \sat \varphi \\
  \model, w &\sat \varphi \wedge \psi &&\text{iff } 
    \model, w \sat \varphi \text{ and } \model, w \sat \psi ~\\
  \model, w &\sat \ubox \varphi &&\text{iff } \forall k ,~ \model, \msuccfct[^k](w) \sat \varphi \\
  \model, w &\sat \varphi \vee \psi &&\text{iff } \model, w \sat \varphi \text{ or } \model, w \sat \psi \\
  \model, w &\sat \varphi \imp \psi &&\text{iff }
    \forall v \succcurlyeq w,~\text{if } \model, v \sat \varphi \text{ then }\model, v \sat \psi
\end{alignat*}
Given a model $\modelbase$,
a set $\Sigma$ of formulas, and $w\in W$,
we write $\Sigma_\model(w)$ for the set $\ens{\psi \in \Sigma}{\model, w \sat \psi}$; the subscript `$\model$' is omitted when it is clear from the context.
An \emph{eventuality} in $\model$ is a pair $(w, \varphi)$, where $w \in W$ 
and $\varphi$ is a formula such that
either $\varphi = \diam\psi$ for some formula $\psi$ and $\model, w \sat \varphi$,
or $\varphi = \ubox\psi$ for some formula $\psi$ and $\model, w \nsat \varphi$.
The \emph{fulfillment} of an eventuality $(w, \varphi)$ is
the finite sequence $v_0 \ldots v_n$ of states of the model such that
\begin{enumerate*}
  \item for all $k \le n$, $v_0 = \msuccfct[^k](w)$,
  \item if $\varphi = \diam\psi$ then $\model, v_n \sat \psi$ and
        for all $k < n$, $\model, v_k \nsat \psi$, and
  \item if $\varphi = \ubox\psi$ then $\model, v_n \nsat \psi$ and
        for all $k < n$, $\model, v_k \sat \psi$.
\end{enumerate*}

A formula $\varphi$ is {\em satisfiable over a class $\Omega$ of models} if
there is a model $\model\in \Omega$ and a world~$w$ so that $\model,w\sat\varphi$, and {\em valid over $\Omega$} if, for every world $w$ of every model $\model\in \Omega$, $\model,w\sat\varphi$.
Satisfiability (resp. validity) over the class of all intuitionisitic dynamic models
is called {\em satisfiability} (resp. \emph{validity}) for
the {\em expanding domain intuitionisitic temporal logic}~$\iltl$.
We will justify this terminology in the next section. First, we remark that dynamic posets impose the minimal conditions on $\msuccfct$ and $\irel$ in order to preserve the upwards-closure of valuations of formulas. Below, we will use the notation $\llbracket\varphi\rrbracket=\{w\in W \mid \model,w\sat \varphi \}.$

\begin{proposition}\label{PropIntCond}
Let $\mathcal D=(W,{\irel},\msuccfct)$, where $(W,{\irel})$ is a poset and $\msuccfct\colon W\to W$ is any function. Then, the following are equivalent:
\begin{enumerate}

\item\label{ItExpOne} $S$ satisfies the confluence property \eqref{EqConf};

\item\label{ItExpTwo} for every valuation $V$ on $W$ and every formula $\varphi$, $\llbracket \varphi \rrbracket$ is upwards-closed under $\irel$.

\end{enumerate}
\end{proposition}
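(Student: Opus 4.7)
The plan is to prove the two implications separately: $(1) \Rightarrow (2)$ by induction on the structure of $\varphi$, and $(2) \Rightarrow (1)$ by constructing an explicit valuation witnessing the failure of upwards-closure whenever confluence fails.

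For the direction $(1) \Rightarrow (2)$, I would fix a valuation $V$ satisfying monotonicity and show by induction on $\varphi$ that $\llbracket\varphi\rrbracket$ is upwards-closed. The atomic case $\varphi = p$ is immediate from condition \eqref{EqVMon}, and $\bot$ is trivial. The Boolean cases $\wedge$ and $\vee$ follow because intersections and unions of upwards-closed sets are upwards-closed. For $\varphi = \psi \imp \chi$, suppose $w \in \llbracket\varphi\rrbracket$ and $w \preccurlyeq v$; any $u \succcurlyeq v$ is also $\succcurlyeq w$ by transitivity, so $u \in \llbracket\psi\rrbracket$ implies $u \in \llbracket\chi\rrbracket$, giving $v \in \llbracket\varphi\rrbracket$ (this case does not even invoke the induction hypothesis). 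For the temporal cases, I would first establish by a secondary induction on $k$ that confluence lifts to iterates, i.e.\ $w \preccurlyeq v$ implies $\sigma^k(w) \preccurlyeq \sigma^k(v)$ for all $k \ge 0$. Then $\tnext$, $\diam$, and $\ubox$ all follow uniformly: if $w \preccurlyeq v$ and $\sigma^k(w) \in \llbracket\psi\rrbracket$, then by the lifted confluence $\sigma^k(w) \preccurlyeq \sigma^k(v)$, and the induction hypothesis applied to $\psi$ yields $\sigma^k(v) \in \llbracket\psi\rrbracket$.

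For the direction $(2) \Rightarrow (1)$, I would proceed by contrapositive. Suppose confluence fails, so there are $w \preccurlyeq v$ with $\sigma(w) \not\preccurlyeq \sigma(v)$. Choose a single propositional variable $p$ and define
\[
V(u) = \begin{cases} \{p\} & \text{if } \sigma(w) \preccurlyeq u, \\ \emptyset & \text{otherwise.} \end{cases}
\]
Transitivity of $\preccurlyeq$ guarantees that $V$ satisfies the monotonicity condition \eqref{EqVMon}. Now $\sigma(w) \in \llbracket p \rrbracket$, so $w \in \llbracket \tnext p \rrbracket$, while $\sigma(v) \notin \llbracket p \rrbracket$ by choice of $v$, so $v \notin \llbracket \tnext p \rrbracket$. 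Thus $\llbracket\tnext p\rrbracket$ is not upwards-closed, contradicting (2).

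I do not expect any real obstacles: the only mildly subtle point is remembering that the implication case of the induction requires transitivity of $\preccurlyeq$ rather than the induction hypothesis on $\psi$ and $\chi$, and that the temporal cases need confluence to be iterated. The counterexample valuation in the converse direction is the most natural choice one could make and works with only a single propositional variable and a single application of $\tnext$, so in fact the equivalence could be strengthened to a local version involving only very restricted fragments of the language.
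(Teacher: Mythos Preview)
Your proof is correct and follows essentially the same approach as the paper: structural induction for $(1)\Rightarrow(2)$, with the iterated-confluence observation handling the temporal cases, and the contrapositive with an explicit one-variable valuation for $(2)\Rightarrow(1)$. Your valuation $V(u)=\{p\}$ iff $\msuccfct(w)\preccurlyeq u$ is in fact the right choice; the paper's printed version defines it via $w\preccurlyeq u$, which appears to be a typo, so your write-up is actually cleaner on that point.
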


\begin{proof}
That \ref{ItExpOne} implies \ref{ItExpTwo} follows by a standard structural induction on $\varphi$. The case where $\varphi\in \mathbb P$ follows from the condition on $V$ and most inductive steps are routine. Consider the case where $\varphi=\ubox \psi$, and suppose that $w\irel v$ and $w\in\llbracket \varphi \rrbracket$. Then, for all $i\in\mathbb N$, $\model, \msuccfct[^i](w) \sat \psi$. Since $S$ is confluent, an easy induction shows that, for all $i\in\mathbb N$, $\msuccfct[^i](w)\irel \msuccfct[^i](v)$. Therefore, from the induction hypothesis we obtain that $ \model, \msuccfct[^i](v) \sat \psi$ for all $i$, hence $v\in \llbracket\varphi\rrbracket$. Other cases are similar or easier.

Now we prove that \ref{ItExpTwo} implies \ref{ItExpOne} by contrapositive. Suppose that $(W,{\irel},{\msuccfct})$ does {\em not} satisfy \eqref{EqConf}, so that there are $w\irel v$ such that $\msuccfct(w)\not\irel \msuccfct (v)$. Choose $p\in \mathbb P$ and define $V(u)=\{p\}$ if $w\irel u$, $V(u)=\varnothing$ otherwise. It is easy to see that $V$ satisfies the monotonicity condition \eqref{EqVMon}. But, $p\not\in V(v)$, from which it follows that $(\mathcal D,V),w\sat \tnext p$ but $(\mathcal D,V),v\nsat \tnext p$.
\end{proof}

We are concerned with the satisfiability and validity problems for $\iltl$. Observe that satisfiability in propositional intuitionistic logic is equivalent to satisfiability in classical propositional logic. This is because, if $\varphi$ is classically satisfiable, it is trivially intuitionistically satisfiable in a one-world model; conversely, if $\varphi$ is intuitionistically satisfiable, it is satisfiable in a finite model, hence in a maximal world of that finite model, and the generated submodel of a maximal world is a classical model. Thus it may be surprising that the same is not the case for intuitionistic temporal logic:

\begin{lemma}
Any formula $\varphi$ of the temporal language that is classically satisfiable is satisfiable in a dynamic poset. However, there is a formula satisfiable on a dynamic poset that is not classically satisfiable.
\end{lemma}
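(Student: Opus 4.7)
The plan is to handle the two claims separately. For the easier direction---classical satisfiability implies satisfiability on a dynamic poset---I would reinterpret any classical LTL trace $(\mathbb N, \msuccfct, V)$ with $\msuccfct(n) = n+1$ as a dynamic poset by equipping $\mathbb N$ with the identity relation as $\peq$. Confluence \eqref{EqConf} and monotonicity \eqref{EqVMon} then become trivial, and since the only $v$ with $w \peq v$ is $v = w$, the intuitionistic implication clause collapses to its classical counterpart. A routine induction on subformulas shows that intuitionistic and classical satisfaction coincide at every $n$, so $\varphi$ remains satisfied at $0$ in the associated dynamic poset.

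For the converse direction, I would exhibit a single formula that is classically unsatisfiable yet holds at some world of some dynamic poset. The idea is to exploit the intuitionistic gap between $p$ and $\neg\neg p$, which collapses classically. Take
\[
  \varphi_0 \;:=\; \ubox \neg\neg p \,\wedge\, \neg\ubox p,
\]
which under classical semantics reduces to $\ubox p \wedge \neg\ubox p$, hence is unsatisfiable.

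To satisfy $\varphi_0$ intuitionistically, I would build the dynamic poset with $W = \mathbb N \times \mathbb N$, relation $(n, m) \peq (n', m')$ iff $n = n'$ and $m \le m'$, successor $\msuccfct(n, m) = (n+1, m)$, and valuation $V(n, m) = \{p\}$ exactly when $m > n$. The dynamic-poset axioms reduce to immediate arithmetic checks (reflexivity, antisymmetry and transitivity of $\peq$; confluence, since $\msuccfct$ preserves the second coordinate; and monotonicity, since $m > n$ and $m \le m'$ force $m' > n$). At $(0,0)$ the conjunct $\neg\ubox p$ holds since for every $v = (0, m) \succcurlyeq (0, 0)$ the state $\msuccfct[^m](v) = (m, m)$ falsifies $p$; and $\ubox\neg\neg p$ holds since at each level $k$, every $(k, m) \succcurlyeq (k, 0)$ admits the point $(k, \max(m, k+1))$ above it satisfying $p$, so $(k, m) \nsat \neg p$, whence $(k, 0) \sat \neg\neg p$.

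The only creative step is picking $\varphi_0$: one needs a formula whose intuitionistic witness essentially uses $\peq$-successors lying strictly off the temporal orbit of the chosen root, since otherwise the classical interpretation of the root's orbit would suffice. Once $\varphi_0$ and the candidate model are in hand, the remaining verifications are mechanical, so this is where the conceptual content of the proof lives.
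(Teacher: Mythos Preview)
Your argument is correct. The first direction is handled exactly as in the paper: equip a classical model with the identity order, so that the intuitionistic clauses collapse to the classical ones. For the second direction, your formula $\ubox\neg\neg p \wedge \neg\ubox p$ and the model on $\mathbb N\times\mathbb N$ do the job; the verifications you sketch all go through.

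The paper takes a different, lighter route for the witness. It uses $\neg\tnext p \wedge \neg\tnext\neg p$, which classically reduces to $\neg\tnext p \wedge \tnext p$, and falsifies it on a three-world model $\{w,v,u\}$ with $\msuccfct(w)=v$, $\msuccfct$ the identity elsewhere, $v\peq u$ the only nontrivial order relation, and $p$ true only at $u$. This has two advantages over your construction: the model is finite (so the example already lives inside the finite-model fragment that the rest of the paper is about), and the formula uses only $\tnext$, showing that the separation from classical $\sf LTL$ appears already in the $\{\tnext\}$-fragment rather than requiring $\ubox$. Your example, on the other hand, illustrates a genuinely temporal phenomenon involving the interaction of $\ubox$ with double negation, closer in spirit to the persistent-frame counterexample of Lemma~\ref{LemmInfExam}.
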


\proof
If $\varphi$ is satisfied on a classical model $\model$, then we may regard $\model$ as an intuitionistic model by letting $\peq$ be the identity.
On the other hand, consider the formula $\neg\tnext p\wedge \neg\tnext\neg p$.
Classically, this formula is equivalent to $\neg\tnext p\wedge \tnext p$, and hence unsatisfiable. Define a model $\modelbase$, where $W=\{w,v,u\}$, $x\peq  y$ if $x=y$ or $x=v$, $y=u$, $\sfun (w)=v$ and $\sfun (x)=x$ otherwise, and $V(u)=\{ p \}$.
Then, one can check that $\model, w \sat \neg\tnext p\wedge \neg\tnext\neg p$.
\endproof

Hence the decidability of the intuitionistic satisfiability problem is not a corollary of the
classical case.
In Section~\ref{SecDec}, we will prove that both the satisfiability and the validity problems are
decidable.

\section{Expanding and persistent frames} \label{SecExpPer}

In this section, we discuss expanding and persistent models, and compare them to dynamic models as we have defined above.

\subsection{Expanding model property}\label{SubsStrat}

The logic $\iltl$ is closely related to {\em expanding products} of modal logics \cite{pml}.
In this subsection,
we introduce stratified and expanding frames,
and show that satisfiability and validity on arbitrary models is equivalent to satisfiability and validity on expanding models.
To do this,
it is convenient to represent posets using acyclic graphs.

\begin{definition}
A \emph{directed acyclic graph} is a tuple $(W, \isuccrel)$, where
$W$ is a set of vertices,
${\isuccrel} \subseteq W \times W$ is a set of edges
whose reflexive, transitive closure $\isuccrel^*$ is antisymmetric. We will tacitly identify $(W, \isuccrel)$ with the poset $(W, \isuccrel^*)$. A \emph{path} from $w_1$ to $w_2$
is a finite sequence $v_0 \ldots v_n \in W$ such that $v_0 = w_1$, $v_n = w_2$
and for all $k < n$, $v_k \isuccrel v_{k+1}$. 
A \emph{tree} is an acyclic graph $(W, \isuccrel )$
with an element $r \in W$, called the root, such that for all $w \in W$
there is a unique path from $r$ to $w$.
A poset $(W,\peq)$ is also a tree if there is a relation $\isuccrel$ on $W\times W$ such that
$(W,\isuccrel)$ is a tree and ${\peq}={\isuccrel^*}$.
\end{definition}

\begin{definition} \label{def:stratified}
  A model $\modelbase$ is \emph{stratified} if there is a partition
  $\left\{W_n\right\}_{n < \omega}$ of $W$ such that
  \begin{enumerate*}
    \item each $W_n$ is closed under $\irel$,
          \label{cond:stratified:closed}
    \item for all $n$, there is relation $\isuccrel_n$ such that
          $(W_n, \reduc{\irel}{W_n})$ is a labeled tree, and
          \label{cond:stratified:tree}
    \item if $w \in W_n$ then $\msuccfct(w) \in W_{n+1}$.
          \label{cond:stratified:succ}
  \end{enumerate*}
  If $\model$ is stratified, we write $\peq_n,\msuccfct_n,$ and $V_n$ instead of $\reduc{\irel}{W_n},\reduc{\msuccfct}{W_n},$ and $\reduc{V}{W_n}$ and write $\model_n=(W_n,\irel_n,V_n)$. If moreover we have that $\msuccfct(w)\peq \msuccfct(v)$ implies $w\peq v$, then we say that $\model$ is an {\em expanding model.}
\end{definition}

Given a finite, non-empty set of formulas $\Sigma$ closed under subformulas,
a model $\modelbase[\Qbase]$, and a state $w\Qbase \in W\Qbase$,
we will construct a stratified model $\modelbase[\Qstrat]$
such that 
for the root $w\Qstrat$ of $W\Qstrat_{0}$, $\Sigma({w\Qstrat}) = \Sigma({w\Qbase})$.
To this end,
we first define the set $\Edefect = \nat \times \nat \times 2^\Sigma$ of possible defects.
Since $\Sigma$ is finite and not empty, we assume that $\Edefect$ is ordered such that
for each $k \in \mathbb N$, the $k$\textsuperscript{th} element $(x, y, S)$ of $\Edefect$
is such that $x \leq k$.
Then, for each $k \in \nat$, we construct inductively a tuple $(\U_k, \isuccrel_k, h_k)$
where $\U_k \subseteq \nat \times \nat$, ${\isuccrel_k} \subseteq \U_k \times \U_k$ and
$\deffun{h_k}{\U_k}{W\Qbase}$.
The model $\model\Qstrat$ is defined from all these tuples
and the whole construction proceeds as follows:

\subparagraph*{Base case.} Let $\U_0 = \{0\} \times \nat$, $\isuccrel_0 = \emptyset$ and 
$h_0$ be such that for all $(0,y) \in \U_0$, $h_0(0,y) = {\msuccfct[\Qbase]}^y\left(w\Qbase\right)$.

\subparagraph*{Inductive case.} Let $k > 0$ and suppose that $(\U_k, \isuccrel_k, h_k)$ has already been
constructed.
Let $(x, y, S)$ be the $k$\textsuperscript{th} element of $\Edefect$.
If
\begin{enumerate*}[label=(D\arabic*)]
  \item $ (x, y) \in \U_k$, \label{cond:stratified:ind:belong}
  \item $\Sigma(h_k(x,y)) \neq S$, and
  \item there is $ v \in W\Qbase$ such that $h_k(x,y) \irel\Qbase v$ and $\wsf{v} = S$,
    \label{cond:stratified:ind:exist}
\end{enumerate*}
then we construct $(\U_{k+1}, \isuccrel_{k+1}, h_{k+1})$ such that:
\begin{align*}
  \U_{k+1} &= \U_k \cup \ens{(c,d) \in \nat \times \nat}{c = k+1 \text{ and } d \ge y} \\
  \isuccrel_{k+1} &= \mathord{\isuccrel_k} \cup
    \ens{((a,b),(c,d))}{a = x, c = k+1, d \ge y \text{ and } b = d} \\
    h_{k+1} &= h_k \cup \ens{((c,d),w)}{c = k+1, d \ge y \text{ and } w = {\msuccfct[\Qbase]}^{d-y}(v)}
\end{align*}
Otherwise $(\U_{k+1}, \isuccrel_{k+1}, h_{k+1}) = (\U_k, \isuccrel_k, h_k)$.

\subparagraph*{Final step.} We construct $\modelbase[\Qstrat]$ such that $W\Qstrat  = \bigcup_{k \in \nat} \U_k$, ${\irel\Qstrat} = \mathord{(\isuccrel\Qstrat)^*},$ where $ \mathord{\isuccrel\Qstrat} = \bigcup_{k \in \nat} \mathord{\isuccrel_k},$
\[\msuccfct\Qstrat = \ens{((a,b),(c,d)) \in W\Qstrat \times W\Qstrat}{a = c \text{ and } d = b + 1},\]
and $  V\Qstrat(x,y) = V\Qbase\left(h_x(x,y)\right).
$

\begin{lemma} \label{lem:stratified:aux}
  For all $(x,y),(x',y') \in W\Qstrat$,
  if $(x,y) \irel\Qstrat (x',y')$,
  then $x \le x'$, $y = y'$ and $h_{x}(x,y) \irel\Qbase h_{x'}(x',y')$.
\end{lemma}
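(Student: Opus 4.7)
The plan is to induct on the length of the $\isuccrel\Qstrat$-path witnessing $(x,y) \irel\Qstrat (x',y')$, since $\irel\Qstrat$ is by construction the reflexive-transitive closure of $\isuccrel\Qstrat = \bigcup_k \isuccrel_k$. The zero-length (reflexive) case is immediate from reflexivity of $\irel\Qbase$.

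For a single edge $(x,y) \isuccrel\Qstrat (x',y')$, I would locate the stage $k+1$ at which this edge was introduced and read off the explicit form of the edges added at that stage. By construction, they arise from a defect $(x_0, y_0, S)$ that is the $k$\textsuperscript{th} element of $\Edefect$ and consist precisely of the pairs $\left((x_0,d), (k+1,d)\right)$ with $d \ge y_0$. Hence $x = x_0$, $x' = k+1$, and $y = y' \ge y_0$; since the enumeration of $\Edefect$ forces $x_0 \le k$, this already yields $x \le x'$ and $y = y'$.

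For the third conclusion, $h_x(x,y) \irel\Qbase h_{x'}(x',y')$, the key auxiliary observation is that within any fixed first coordinate $a$, the map $d \mapsto h_a(a,d)$ satisfies $h_a(a, d+1) = \msuccfct\Qbase\left(h_a(a,d)\right)$. This is transparent at level $0$ from the base case and is built in at every inductive level $k+1$ by the definition $h_{k+1}(k+1,d) = \left(\msuccfct\Qbase\right)^{d-y_0}(v)$. Applied inside level $x$, it gives $h_x(x,y) = \left(\msuccfct\Qbase\right)^{y-y_0}\left(h_x(x,y_0)\right)$, while the construction provides $h_{x'}(x',y') = \left(\msuccfct\Qbase\right)^{y-y_0}(v)$ for some $v$ with $h_x(x,y_0) \irel\Qbase v$. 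Iterating the forward-confluence condition \eqref{EqConf} $y-y_0$ times then delivers $h_x(x,y) \irel\Qbase h_{x'}(x',y')$.

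The inductive step decomposes any longer path as $(x,y) \isuccrel\Qstrat (x'',y'') \isuccrel\Qstrat^* (x',y')$, applies the one-edge case and the induction hypothesis, and chains the three conclusions: the $x$-inequalities compose, the $y$-equalities compose, and the two $\irel\Qbase$-relations on $h$-values compose by transitivity of $\irel\Qbase$. I expect the main difficulty to be the \emph{within-a-level} identity for $h$ used above; once that is in place, the remainder is routine bookkeeping around conditions \ref{cond:stratified:ind:belong}--\ref{cond:stratified:ind:exist} and the bound $x_0 \le k$ on defects.
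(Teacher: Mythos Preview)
Your proposal is correct and follows essentially the same approach as the paper: both arguments walk along the $\isuccrel\Qstrat$-path, analyze a single edge by unpacking the stage at which it was created, use the enumeration bound on $\Edefect$ to get $x\le x'$, read off $y=y'$ from the shape of the added edges, and obtain the $h$-relation via the identity $h_a(a,d)=\msuccfct\Qbase^{\,d-y_0}(h_a(a,y_0))$ together with forward confluence. Your write-up is slightly more explicit about the induction on path length and about the within-a-level identity for $h$, but there is no substantive difference in method.
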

\begin{proof}
  Suppose that $(x,y) \irel\Qstrat (x',y')$.
  There is a sequence $(x_0, y_0) \ldots (x_n, y_n)$ such that
  $(x_0, y_0) = (x, y)$, $(x_n, y_n) = (x', y')$ and
  for all $i < n$, $(x_i, y_i) \isuccrel\Qstrat (x_{i+1}, y_{i+1})$.
  By construction, for all $i < n$, $(x_i, y_i) \isuccrel_{x_{i+1}} (x_{i+1}, y_{i+1})$
  and $y_i = y_{i+1}$.
  \newcommand{\Qdef}{''_{i+1}}
  Let $(x\Qdef, y\Qdef, S\Qdef)$ be the $(x_{i+1}-1)$\textsuperscript{th} element of $\Edefect$
  and $v\Qdef$ the element of $W\Qbase$ choosen at the $(x_{i+1}-1)$\textsuperscript{th} step.
  By construction, $x\Qdef = x_i$, $y\Qdef \le y_i$
  and by the ordering of $\Edefect$, $x_i \le x_{i+1} - 1$.
  Moreover, $h_{x_i}(x, y\Qdef) \irel\Qbase v\Qdef$.
  Since
  $h_{x_i}(x_i, y_i) = \msuccfct\Qbase^{y_i - y\Qdef}\left(h_{x_i}\left(x_i, y\Qdef\right)\right)$
  and
  $h_{x_{i+1}}(x_{i+1}, y_{i+1}) = \msuccfct\Qbase^{y_{i+1} - y\Qdef}\left(v\Qdef\right)$,
  by the confluence condition for $\model\Qbase$,
  $h_{x_i}(x_i, y_i) \irel\Qbase h_{x_{i+1}}(x_{i+1}, y_{i+1})$.%
  \joseph{if space is needed, I can shrink this proof.}
\end{proof}

\begin{lemma}
  $\model\Qstrat$ is an expanding model.
\end{lemma}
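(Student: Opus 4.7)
The plan is to identify a partition of $W\Qstrat$ witnessing stratification, verify the three conditions of Definition~\ref{def:stratified}, and then establish the back-confluence of $\msuccfct\Qstrat$ that upgrades stratified to expanding.

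I would take the partition indexed by the second coordinate: set $W^{\mathrm{e}}_n = \ens{(x,n) \in W\Qstrat}{x \in \nat}$. Closure under $\irel\Qstrat$ is immediate from Lemma~\ref{lem:stratified:aux}, which pins down the second coordinate. The condition that $\msuccfct\Qstrat$ maps $W^{\mathrm{e}}_n$ into $W^{\mathrm{e}}_{n+1}$ is immediate from the definition $\msuccfct\Qstrat(x,n) = (x, n+1)$. For the tree condition I take $\isuccrel_n$ to be the restriction of $\isuccrel\Qstrat$ to $W^{\mathrm{e}}_n$ and observe that every edge incident to $(x, n)$ with $x > 0$ was added at the unique step $x-1$ that introduced branch $x$, so $(x, n)$ has a uniquely determined parent $(x', n)$ (where $x'$ is the first coordinate of the defect processed at that step). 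Induction on $x$ then yields a unique path from the root $(0, n)$ to every element of $W^{\mathrm{e}}_n$.

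The substantive step is back-confluence. Suppose $\msuccfct\Qstrat(x, n) \irel\Qstrat \msuccfct\Qstrat(c, m)$. Lemma~\ref{lem:stratified:aux} forces $n = m$, and unfolds $(x, n+1) \irel\Qstrat (c, n+1)$ as an $\isuccrel\Qstrat$-chain $x = x_0, x_1, \ldots, x_k = c$ inside $W^{\mathrm{e}}_{n+1}$. By backward induction on $i$ from $k$ to $0$, I would argue that $(x_i, n) \in W\Qstrat$ and, for $i < k$, that $(x_i, n) \isuccrel\Qstrat (x_{i+1}, n)$: the existence of $(x_{i+1}, n)$ means branch $x_{i+1}$ was introduced at some starting level $y \le n$, but the edges created at that moment were the entire family $\ens{((x_i, d), (x_{i+1}, d))}{d \ge y}$, so in particular the level-$n$ copy is present. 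Concatenating yields $(x, n) \irel\Qstrat (c, n)$.

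I expect this last step to be the main obstacle, since an edge at level $n+1$ does not in general admit a counterpart at level $n$: a new branch may be introduced for the first time at an arbitrary level. The back-confluence property sidesteps this precisely because both $(x, n)$ and $(c, n)$ are assumed to exist, which forces every intermediate branch $x_i$ in the chain to have been present already at level $n$, so the whole chain can be transported downward.
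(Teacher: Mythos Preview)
Your approach matches the paper's: the same partition by second coordinate, the same observation that each node with $x>0$ has a unique $\isuccrel\Qstrat$-parent determined at the step that created column $x$, and the same induction on path length for the expanding property (the paper establishes the one-step version---if $(c,b)\in W\Qstrat$ and $(a,b+1)\isuccrel\Qstrat(c,b+1)$ then $(a,b)\isuccrel\Qstrat(c,b)$---and then appeals to induction, exactly as you do). One small omission: before checking stratification you should also verify that $\model\Qstrat$ is an intuitionistic dynamic \emph{model} at all---i.e.\ that $\irel\Qstrat$ is a partial order (antisymmetry via Lemma~\ref{lem:stratified:aux}), that $V\Qstrat$ is monotone, and that forward confluence holds (if $(x,y)\isuccrel\Qstrat(x',y')$ then $(x,y+1)\isuccrel\Qstrat(x',y'+1)$, by the shape of the edge sets added in the inductive step)---the paper does spell these out, and Definition~\ref{def:stratified} presupposes them.
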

\begin{proof}
  First we check that $\model\Qstrat$ is stratified.
  By Lemma~\ref{lem:stratified:aux}, $\irel\Qstrat$ is antisymetric, hence a partial order.
  For the monotonicity condition, suppose that $(x,y) \irel\Qstrat (x',y')$.
  By Lemma~\ref{lem:stratified:aux}, $h_x(x,y) \irel\Qbase h_{x'}(x',y')$ and
  by the monotonicity condition for $\model\Qbase$, 
  $V\Qbase\left(h_x(x,y)\right) \subseteq V\Qbase\left(h_{x'}(x',y')\right)$.
  For the confluence condition, it suffices to observe that by construction,
  if $(x,y) \isuccrel\Qstrat (x', y')$ then $(x, y+1) \isuccrel\Qstrat (x', y'+1)$.
  Therefore, $\model\Qstrat$ is a model.
  To prove that $\model\Qstrat$ is stratified,
  define $W\Qstrat_n = \ens{(x,y) \in W\Qstrat}{y = n}$
  for all $n \in \nat$.
  Conditions~\ref{cond:stratified:succ} of Def.~\ref{def:stratified} trivially holds and
  condition~\ref{cond:stratified:closed} comes directly from Lemma~\ref{lem:stratified:aux}.
  To prove condition~\ref{cond:stratified:tree}, it suffices to observe that by construction,
  for all $(x,y) \in W\Qstrat$, either $x = 0$ or
  there is exactly one state $(x',y') \in W\Qstrat$ such that $(x', y') \isuccrel\Qstrat (x,y)$.
  Therefore, by Lemma~\ref{lem:stratified:aux}, for all $(x,y) \in W\Qstrat$,
  there is a unique path from $(0, y)$ to $(x, y)$.
  Finally, to prove that $\model\Qstrat$ is expanding,
  suppose that $(c,b) \in W\Qstrat$ and $(a, b+1) \isuccrel\Qstrat (c, b+1)$.
  Then the $(c-1)$\textsuperscript{th} element of $\Edefect$ is $(a, y, S)$ for some $y, S$.
  Moreover, since $(c,b) \in W\Qstrat$, $b \ge y$ and since $(x,y) \in W\Qstrat$,
  $(a,b) \in W\Qstrat$ and $(a,b) \isuccrel\Qstrat (c,b)$.
  Therefore it can easily be proved
  by induction on the length of the path from $\msuccfct\Qstrat(w)$ to $\msuccfct\Qstrat(v)$
  that $\msuccfct\Qstrat(w) \irel\Qstrat \msuccfct\Qstrat(v)$ implies $w \irel\Qstrat v$.
\end{proof}

\begin{lemma}
  For any state $(x,y) \in W\Qstrat$ and any $\psi \in \Sigma$, 
  $\model\Qstrat, (x,y) \sat \psi$ if and only if $\model\Qbase, h_x(x,y) \sat \psi$.
\end{lemma}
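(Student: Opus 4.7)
The plan is to proceed by structural induction on $\psi \in \Sigma$, using that $\Sigma$ is closed under subformulas so that the induction hypothesis applies to each immediate subformula of $\psi$.

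The atomic case is immediate from $V\Qstrat(x,y) = V\Qbase(h_x(x,y))$, and the conjunction and disjunction cases are routine. For $\psi = \tnext\chi$, the definition of $\msuccfct\Qstrat$ gives $\msuccfct\Qstrat(x,y) = (x, y+1)$, and the construction of $h$ yields $h_x(x, y+1) = \msuccfct\Qbase(h_x(x,y))$; the IH on $\chi$ then closes this case. The cases $\psi = \diam\chi$ and $\psi = \ubox\chi$ follow by iterating: $\msuccfct\Qstrat^k(x,y) = (x, y+k)$ and $h_x(x, y+k) = \msuccfct\Qbase^k(h_x(x,y))$, so that the existential (resp.\ universal) quantification over $k$ matches in both models.

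The main case is $\psi = \chi_1 \imp \chi_2$. For the direction from $\model\Qbase, h_x(x,y) \sat \chi_1 \imp \chi_2$ to $\model\Qstrat, (x,y) \sat \chi_1 \imp \chi_2$, take any $(x',y')$ with $(x,y) \irel\Qstrat (x',y')$ and $\model\Qstrat, (x',y') \sat \chi_1$. Lemma~\ref{lem:stratified:aux} gives $y' = y$ and $h_x(x,y) \irel\Qbase h_{x'}(x',y')$; the IH on $\chi_1$ lifts the satisfaction to $h_{x'}(x',y')$, the hypothesis yields $\chi_2$ there, and the IH on $\chi_2$ transfers it back to $(x',y')$.

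For the converse direction, suppose $\model\Qstrat, (x,y) \sat \chi_1 \imp \chi_2$ and take $v \in W\Qbase$ with $h_x(x,y) \irel\Qbase v$ and $\model\Qbase, v \sat \chi_1$. The goal is to find $(x', y) \in W\Qstrat$ with $(x,y) \irel\Qstrat (x',y)$ and $\wsf{h_{x'}(x',y)} = \wsf{v}$. If $\wsf{v} = \wsf{h_x(x,y)}$, take $(x',y) := (x,y)$. Otherwise the triple $(x, y, \wsf{v}) \in \Edefect$ is the $k$\textsuperscript{th} element for some $k$; the ordering assumption on $\Edefect$ ensures $x \le k$, so $(x,y) \in \U_k$, settling condition~\ref{cond:stratified:ind:belong}. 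Since $h_j(x,y) = h_x(x,y)$ for every $j \ge x$ by construction, we have $\wsf{h_k(x,y)} = \wsf{h_x(x,y)} \neq \wsf{v}$, and $v$ itself witnesses condition~\ref{cond:stratified:ind:exist}. Hence the construction creates a fresh branch with first coordinate $k+1$ such that $h_{k+1}(k+1, y)$ has $\Sigma$-type exactly $\wsf{v}$; set $(x',y) := (k+1, y)$. Two applications of the IH then conclude: $\chi_1 \in \wsf{v}$ gives $\model\Qstrat, (x',y) \sat \chi_1$, the main hypothesis forces $\model\Qstrat, (x',y) \sat \chi_2$, and the IH on $\chi_2$ gives $\chi_2 \in \wsf{v}$, i.e.\ $\model\Qbase, v \sat \chi_2$.

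The sole real obstacle is this converse direction of the implication case: one has to verify that the defect-enumeration mechanism really does fire for every $\Sigma$-type realised above $h_x(x,y)$ in $\model\Qbase$. This is precisely the purpose of enumerating every triple of $\Edefect$ and of the ordering constraint that $(x,y)$ be created in $\U$ before its own defects are processed.
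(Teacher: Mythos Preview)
Your proof is correct and follows essentially the same approach as the paper's own argument: structural induction on $\psi$, with the temporal cases reduced to the identity $h_x(x,y+n)=\msuccfct\Qbase^{\,n}(h_x(x,y))$ and the implication case handled via Lemma~\ref{lem:stratified:aux} in one direction and the defect-enumeration mechanism in the other. The only cosmetic difference is that the paper argues both implication directions by contrapositive, whereas you argue them directly and make the case split on whether $\wsf{v}=\wsf{h_x(x,y)}$ (i.e.\ whether condition~(D2) fails) explicit.
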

\begin{proof}
  The proof is by induction on the size $\nos{\psi}$ of the formula.
  The cases for propositional variables, falsum, conjunctions and disjunctions are straightforward.
  For the temporal modalities, it suffices to observe that
  for all $(x, y) \in W\Qstrat$ and all $n \in \nat$,
  $(x, y + n) \in W\Qstrat$ and $h_x(x, y + n) = \msuccfct\Qbase^n\left(h_x(x,y)\right)$.
  Finally, for implication, suppose first that $\model\Qstrat, (x,y) \nsat \psi_1 \imp \psi_2$.
  Then there is $(x',y')$ such that $(x, y) \irel\Qstrat (x', y')$,
  $\model\Qstrat, (x', y') \sat \psi_1$ and $\model\Qstrat, (x', y') \nsat \psi_2$.
  By Lemma~\ref{lem:stratified:aux}, $h_x(x, y) \irel\Qbase h_{x'}(x', y')$
  and by induction hypothesis, $\model\Qbase, h_{x'}(x', y') \sat \psi_1$ and
  $\model\Qbase, h_{x'}(x', y') \nsat \psi_2$.
  Therefore, $\model\Qbase, h_x(x, y) \nsat \psi_1 \imp \psi_2$.
  For the other direction suppose that $\model\Qbase, h_x(x,y) \nsat \psi_1 \imp \psi_2$.
  There is $v' \in W\Qbase$ such that $h_x(x, y) \irel\Qbase v'$,
  $\model\Qbase, v' \sat \psi_1$ and $\model\Qbase, v' \nsat \psi_2$.
  Let $k$ be such that $(x, y, \Sigma(v'))$ is the $k$\textsuperscript{th} element of $\Edefect$.
  Condition~\ref{cond:stratified:ind:exist} trivially holds and since $x \le k$,
  condition~\ref{cond:stratified:ind:belong} holds too.
  Hence, there is $(x', y') \in W\Qstrat$ such that $\Sigma(h_{x'}(x', y')) = \wsf{v'}$
  and either $(x', y') = (x, y)$ or $(x', y') \isuccrel\Qstrat (x, y)$.
  By induction hypothesis, $\model\Qstrat, (x',y') \sat \psi_1$ and
  $\model\Qstrat, (x', y') \nsat \psi_2$, hence $\model\Qstrat, (x, y) \nsat \psi_1 \imp \psi_2$.
\end{proof}

In conclusion, we obtain the following:

\begin{theorem}\label{TheoStrat}
  A formula $\varphi$ is satisfiable (resp. falsifiable) on an intuitionistic dynamic model
  if and only if   it is satisfiable (resp. falsifiable) on an expanding model.
\end{theorem}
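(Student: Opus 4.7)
The plan is to recognize that this theorem is essentially a packaging of the construction and the three lemmas that immediately precede it. One direction is immediate: every expanding model is, by Definition~\ref{def:stratified}, in particular an intuitionistic dynamic model, so any formula satisfiable (respectively falsifiable) on an expanding model is automatically satisfiable (respectively falsifiable) on a dynamic model.

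For the converse, I would treat the two cases uniformly. Suppose $\model\Qbase, w\Qbase \sat \varphi$ (respectively $\model\Qbase, w\Qbase \nsat \varphi$) for some intuitionistic dynamic model $\model\Qbase$ and some world $w\Qbase \in W\Qbase$. I would then set $\Sigma := \sff[\varphi]$; this is finite, non-empty, closed under subformulas, and contains $\varphi$, so the construction of this section applies with these data. It produces a stratified model $\model\Qstrat$ whose underlying set contains the point $(0,0) \in \U_0$, and by the base case of the construction ${h_0(0,0) = {\msuccfct[\Qbase]}^0(w\Qbase) = w\Qbase}$.

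It then suffices to combine the two following lemmas. The second preceding lemma shows that $\model\Qstrat$ is an expanding model, producing a witness of the correct form. The truth-preservation lemma, applied at $(x,y) = (0,0)$ and $\psi = \varphi \in \Sigma$, yields $\model\Qstrat, (0,0) \sat \varphi$ if and only if $\model\Qbase, w\Qbase \sat \varphi$. Hence $\varphi$ is satisfied (respectively falsified) at $(0,0)$ in the expanding model $\model\Qstrat$, as required.

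There is no genuinely new obstacle at this final step: the heavy lifting has already been done in designing the construction, in particular the enumeration of the defect set $\Edefect$ which cycles through every pair of a world-coordinate and a target subformula profile, ensuring that every potential implication-witness indexed by $\Sigma$ is eventually realised. It is likewise already carried out in the three preceding lemmas, especially the forward-confluence-driven Lemma~\ref{lem:stratified:aux} and the implication case of the truth-preservation lemma, where the enumeration of $\Edefect$ is cashed in. The theorem is then merely the observation that one may feed $\Sigma = \sff[\varphi]$ into that machinery to obtain an expanding witness tailored to $\varphi$.
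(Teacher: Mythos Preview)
Your proposal is correct and matches the paper's approach exactly: the paper does not even write out a separate proof of Theorem~\ref{TheoStrat}, merely stating ``In conclusion, we obtain the following'' after the construction and the three lemmas, so your argument is precisely the intended unpacking of that sentence. The only thing to note is that the trivial direction (expanding $\Rightarrow$ dynamic) is indeed immediate since expanding models are by definition stratified models, which are in turn dynamic models with extra structure.
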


\subsection{Persistent frames}

Expanding models were introduced as a weakening of product models.
They often lead to logics with a less complex validity problem.
Thus it is natural to also consider a variant of $\iltl$
interpreted over product models,
or over the somewhat wider class of persistent models.

\begin{definition}
Let $(W,{\preccurlyeq})$ be a poset.
If $\msuccfct\colon W\to W$ is such that,
whenever $v \succcurlyeq \sfun(w)$, there is $u\succcurlyeq w$ such that $v=\sfun (u)$,
we say that $\sfun$ is {\em backward confluent}.
If $\sfun$ is both forward and backward confluent, we say that it is {\em persistent}.
A tuple $(W,{\preccurlyeq},\msuccfct)$ where $\msuccfct$ is persistent is a {\em persistent
intuitionistic temporal frame}, and the set of valid formulas over the class of persistent
intuitionistic temporal frames is denoted $\itlb$, or {\em persistent domain $\sf LTL$}.
\end{definition}

The name `persistent' comes from the fact that Theorem \ref{TheoStrat} can be modified to obtain a stratified model $\model'$ where $\msuccfct'\colon W'_k\to W'_{k+1}$ is an isomorphism, i.e.~whose domains are persistent with respect to $\msuccfct'$. As we will see, the finite model property fails over the class of persistent models.

\begin{lemma}\label{LemmInfExam}
The formula $\varphi=\neg\neg\diam\ubox p\to \diam\neg\neg\ubox p$ is not valid over the class of persistent models.
\end{lemma}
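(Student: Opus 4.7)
The plan is to exhibit a persistent model $\model$ and a world $w$ at which $\neg\neg\diam\ubox p$ holds but $\diam\neg\neg\ubox p$ fails. I would take the product-style model with $W = P \times \mathbb N$, where the spatial poset $P = \{*\} \cup \{1, 2, 3, \ldots\}$ has $*$ strictly below every positive integer and the positive integers pairwise incomparable. Let $(x, n) \peq (y, m)$ iff $n = m$ and $x \peq_P y$, let $\sfun(x, n) = (x, n+1)$, and set $V(m, n) = \{p\}$ exactly when $m \ge 1$ and $n \ge m$, with $V(*, n) = \emptyset$ for all $n$. Monotonicity of $V$ is trivial because the only nontrivial upward $\peq$-step out of $*$ departs from a point where $p$ is already false, and persistence of $\sfun$ is immediate since $\sfun$ merely shifts time while preserving the (identical) poset structure on each time-slice.

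For the antecedent, I would show that any $v \succcurlyeq (*, 0)$ is dominated by some $u$ at which $\diam\ubox p$ holds. If $v = (*, 0)$, pick $u = (1, 0)$; otherwise $v = (m, 0)$ with $m \ge 1$, and take $u = v$. In either case $\ubox p$ holds at $\sfun^m(u) = (m, m)$, since $p$ is true at $(m, m+i)$ for every $i \ge 0$. Hence $\model, (*, 0) \sat \neg\neg\diam\ubox p$.

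To refute the consequent, it suffices to show that $\neg\neg\ubox p$ fails at $\sfun^k(*, 0) = (*, k)$ for every $k$. Given $k$, consider $v := (k+1, k) \succcurlyeq (*, k)$. Because $k+1$ is a maximal element of $P$, the only $u \succcurlyeq v$ is $v$ itself; and $\ubox p$ fails at $v$ already, as $p$ is false at $v$ (since $k < k+1$). So no $u \succcurlyeq v$ witnesses $\ubox p$, and $\neg\neg\ubox p$ fails at $(*, k)$. Therefore $\diam\neg\neg\ubox p$ fails at $(*, 0)$, refuting the implication.

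The only real design decision is arranging that the maximal points of the poset eventually carry $\ubox p$---supplying the antecedent---while their ``switch-on times'' are unbounded, so that at every fixed time there remains a maximal point whose $p$-value blocks $\neg\neg\ubox p$. Once these features are in place the verifications reduce to straightforward case distinctions on the semantic clauses.
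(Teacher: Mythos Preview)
Your proof is correct. Both you and the paper exhibit a persistent countermodel, but the constructions differ. The paper uses $W=\mathbb Z\cup\{r\}$ with a single root $r$ strictly below every integer, $\sfun(r)=r$ and $\sfun(n)=n+1$ for $n\in\mathbb Z$, and $\llbracket p\rrbracket=[0,\infty)$. Your model is an explicit Cartesian product $P\times\mathbb N$, where $P$ is a fan ($*$ below countably many pairwise incomparable leaves $1,2,3,\dots$), with $p$ switching on at time $m$ along the $m$th branch. The two share the essential mechanism: every maximal world eventually satisfies $\ubox p$ (so the antecedent holds at the root), while the switch-on times are unbounded (so at each step from the root some maximal world still refutes $\ubox p$, killing the consequent). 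The paper's model is more economical and, because $\sfun$ fixes $r$, refuting $\diam\neg\neg\ubox p$ reduces to refuting $\neg\neg\ubox p$ at the single world $r$; your product model is more transparently persistent (each time-slice carries an identical copy of $P$) and avoids any fixed point, at the modest cost of checking the consequent at each time $k$ via the witness $(k+1,k)$.
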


\proof
Consider the model $\mathcal M=(W,{\preccurlyeq},\msuccfct,V)$, where $W=\mathbb Z \cup \{r\}$ with $r$ a fresh world not in $\mathbb Z$, $w\preccurlyeq v$ if and only if $w=r$ or $w=v$, $\msuccfct(r)=r$ and $\msuccfct(n)=n+1$ for $n\in\mathbb Z$, and $\llbracket p\rrbracket =[0,\infty)$. It is readily seen that $\mathcal M$ is a persistent model, that $\model,r\sat \neg\neg\diam\ubox p$ (since every maximal world above $r$ satisfies $\diam\ubox p$), yet $\model,r\nsat \diam\neg\neg\ubox p$, since there is no $n$ such that $\model, \msuccfct^n(r) \sat \neg\neg\ubox p$. It follows that $\model,r\nsat  \varphi $, and hence $\varphi$ is not valid, as claimed.
\endproof

\begin{lemma}\label{LemmFinExam}
The formula $\varphi$ (from Lemma \ref{LemmInfExam}) is valid over the class of finite, persistent models.
\end{lemma}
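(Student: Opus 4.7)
The plan is to exploit finiteness to translate $\neg\neg$ into a condition on $\preccurlyeq$-maximal worlds, and then use backward confluence to reach those maximal worlds after a uniform number of steps. First I would record a general observation: if $\model$ is any finite model and $\psi$ any formula, then $\model, u \sat \neg\neg\psi$ iff every $\preccurlyeq$-maximal world $v \succcurlyeq u$ satisfies $\psi$. Indeed, any $v \succcurlyeq u$ extends to some maximal $v^* \succcurlyeq v$ by finiteness, and $\model,v^* \sat \psi$ gives $\model,v \nsat \neg\psi$; conversely, if $v$ is maximal and $v \nsat \psi$, then the only world above $v$ is $v$ itself, so $\model,v \sat \neg\psi$.

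Now fix a finite persistent model $\model = (W, \preccurlyeq, \msuccfct, V)$ and $w \in W$ with $\model, w \sat \neg\neg\diam\ubox p$. By the observation, every maximal $v \succcurlyeq w$ satisfies $\diam\ubox p$, so for each such $v$ there is $k_v \in \nat$ with $\model, \msuccfct^{k_v}(v) \sat \ubox p$. Let $K := \max_v k_v$ over the finite set of maximal worlds above $w$. I claim $\model, \msuccfct^K(w) \sat \neg\neg\ubox p$, which immediately yields $\model, w \sat \diam\neg\neg\ubox p$, as required.

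To prove the claim, take an arbitrary $u \succcurlyeq \msuccfct^K(w)$; the goal is to produce $u' \succcurlyeq u$ with $\model, u' \sat \ubox p$, witnessing $u \nsat \neg\ubox p$. By iterating backward confluence $K$ times, there exists $v \succcurlyeq w$ with $\msuccfct^K(v) = u$. Pick any maximal $v^* \succcurlyeq v$; since $v^* \succcurlyeq v \succcurlyeq w$, it is also maximal above $w$, so $\model, \msuccfct^{k_{v^*}}(v^*) \sat \ubox p$. Because $\ubox p$ is preserved along $\msuccfct$ (if $\ubox p$ holds at $x$ then $\msuccfct^{j}(x) \sat p$ for all $j$, so $\ubox p$ also holds at $\msuccfct^m(x)$ for every $m$) and $K \geq k_{v^*}$, we obtain $\model, \msuccfct^K(v^*) \sat \ubox p$. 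Finally, forward confluence gives $\msuccfct^K(v^*) \succcurlyeq \msuccfct^K(v) = u$, so $u' := \msuccfct^K(v^*)$ is the desired witness.

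The conceptual crux is the translation of $\neg\neg$ into a statement about maximal worlds, which is precisely what breaks down in the infinite counterexample of Lemma~\ref{LemmInfExam}: there, no world above $r$ is $\preccurlyeq$-maximal, so the $k_v$'s cannot be uniformly bounded. The remaining technical ingredients---iterated backward confluence, and preservation of $\ubox p$ along $\msuccfct$---are routine and pose no real obstacle.
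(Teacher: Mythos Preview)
Your argument is correct and follows essentially the same approach as the paper's proof: enumerate the $\preccurlyeq$-maximal worlds above $w$, take $K$ to be the maximum of the witnessing exponents for $\diam\ubox p$ at those worlds, and then verify $\model,\msuccfct^K(w)\sat\neg\neg\ubox p$ via backward confluence. The only cosmetic difference is that the paper checks $\neg\neg\ubox p$ by taking $u$ to be a \emph{leaf} above $\msuccfct^K(w)$ and showing $u\sat\ubox p$ directly, whereas you take an arbitrary $u$ and produce a witness $u'\succcurlyeq u$; both are equivalent unpackings of $\neg\neg$ in a finite model, and your version is if anything slightly more explicit.
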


\proof
Let $\model=(W,{\preccurlyeq},\msuccfct,V)$ be a finite, persistent model, and assume that $\model,w\sat \neg\neg\diam\ubox p $.
Let $v_1,\hdots,v_n$ enumerate the maximal elements of $\{v\in W \mid w\peq v\}$. 
For each $i\leq n$, let $k_i$ be large enough so that $\model,\msuccfct^{k_i}(v_i)\sat \ubox p $, and let $k=\max k_i$. We claim that $\model, \msuccfct^k(w)\sat \neg\neg\ubox p $, which concludes the proof. Let $u\succcurlyeq \msuccfct^k(w)$ be any leaf. Then, there is $v_i\succcurlyeq w$ such that $u=\msuccfct^{k}(v_i)$ (since compositions of persistent functions are persistent). But, since $k\geq k_i$, we obtain $\model,u\sat \ubox p$, as desired.
\endproof

The following is then immediate from Lemmas \ref{LemmInfExam} and \ref{LemmFinExam}:

\begin{theorem}
${\itlb}$ does not have the finite model property.
\end{theorem}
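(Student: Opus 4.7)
The plan is to derive the theorem as an immediate corollary of Lemmas \ref{LemmInfExam} and \ref{LemmFinExam}. Together these lemmas exhibit a single formula, namely $\varphi = \neg\neg\diam\ubox p \to \diam\neg\neg\ubox p$, that separates the class of persistent models from the class of finite persistent models: $\varphi$ is refuted in some (necessarily infinite) persistent model by Lemma \ref{LemmInfExam}, yet holds universally on every finite persistent model by Lemma \ref{LemmFinExam}. This separation is exactly the negation of the finite model property.

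More explicitly, I would argue by contraposition. The finite model property for $\itlb$ asserts that any formula valid on all finite persistent models is also valid on all persistent models. Applying this principle to $\varphi$, Lemma \ref{LemmFinExam} would force $\varphi$ to be valid in $\itlb$, contradicting Lemma \ref{LemmInfExam}. Hence $\itlb$ does not have the finite model property. All the substantive work has already been absorbed by the two lemmas, so no further obstacle remains at the level of the theorem itself; the only creative step was the choice of $\varphi$, which exploits the fact that infinite persistent models allow the ``stability time'' for $\ubox p$ to diverge along distinct maximal branches above a given world, while finite persistent models admit a uniform bound obtained by taking the maximum over the finitely many branches and then propagating that bound via backward confluence.
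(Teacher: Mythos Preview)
Your proposal is correct and matches the paper's own treatment: the paper states the theorem as ``immediate from Lemmas~\ref{LemmInfExam} and~\ref{LemmFinExam}'' without further argument, and your contrapositive unpacking of that immediacy is exactly right.
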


Thus our decidability proof for $\iltl$, which proceeds by first establishing a strong finite model property, does not carry over to $\itlb$. Whether $\itlb$ is decidable remains open.

\section{Combinatorics of intuitionistic models}

In this section we introduce some combinatorial tools we will need in order to prove that $\iltl$ has the strong finite model property, and hence is decidable. We begin by discussing labeled structures, which allow for a graph-theoretic approach to intuitionistic models.

\subsection{Labeled structures and quasimodels}

\begin{definition}
Given a set $\Labels$ whose elements we call `labels' and a set $W$, a {\em $\Lambda$-labeling
function} on $W$ is any function $\labfun\colon W\to \Labels$. A structure ${\mathcal
S}=(W,R,\labfun)$ where $W$ is a set, $R\subseteq W\times W$ and $\labfun$ is a labeling function on
$W$ is a {\em $\Labels$-labeled structure,} where `structure' may be replaced with `poset', `directed graph', etc.
\end{definition}

A useful measure of the complexity of a labeled poset is given by its level:

\begin{definition}
Given a labeled poset $\A=(W, {\peq},\labfun)$ and an element $w \in W$,
an \emph{increasing chain} from $w$ of length $n$ is
a sequence $v_1 \ldots v_n$ of elements of $W$ such that $v_1 = w$ and $\forall i < n ,~ v_i \prec v_{i+1},$
where $u\prec u'$ is shorthand for $u\peq u'$ and $u'\not\peq u$. The chain $v_1 \ldots v_n$ is {\em proper} if it moreover satisfies $\forall i < n ,~ \labfun\left(v_i\right) \ne \labfun\left(v_{i+1}\right).$
The {\em depth} $\dpt{w} \in \nat \cup \{\omega\}$ of $w$ is defined such that
$\dpt{w} = m$ if $m$ is the maximal length of all the increasing chains from $w$
and $\degp{w} = \omega$ is there is no such maximum. 
%
Similarly,
the \emph{level} $\degp{w} \in \nat \cup \{\omega\}$ of $w$ is defined such that
$\degp{w} = m$ if $m$ is the maximal length of all the proper increasing chains from $w$
and $\degp{w} = \omega$ if there is no such maximum. 
The level $\degp{\A}$ of $\A$ is the maximal level of all its elements.
\end{definition}

An important class of labeled posets comes from intuitionistic models.

\begin{definition}
Given an intuitionistic Kripke model $\model = (W, \irel, V)$ and
a set $\Sigma$ of intuitionistic formulas closed under subformulas,
it can easily be checked that for all $w, v \in W$,
if $w \irel v$ then $\Sigma(w) \subseteq \Sigma(v)$.
We denote the labeled poset $(W, \irel, \Sigma(\cdot))$ by $\model^\Sigma$.
Conversely,
given a labeled poset $\A=(W,{\peq},\labfun)$ over $2^\Sigma$,
the valuation $V_{\labfun}$ is defined such that
$V_{\labfun}(w) = \{p \in \Var \mid p \in \labfun(w) \}$ for all $w \in W$, and denote the resulting model by $\newmodel\A$.
\end{definition}

\begin{definition}
Let $\Sigma$ be a finite set of formulas closed under subformulas and $\A=(W,{\peq},\labfun)$ be a $2^\Sigma$-labeled poset. We say that $\A$ is a {\em $\Sigma$-quasimodel} if $\labfun $ is monotone in the sense that $w\peq  v$ implies that $\labfun(w)\subseteq \labfun(v)$, and whenever $\varphi\to\psi\in \Sigma$ and $w\in W$, we have that $\varphi\to\psi \in \labfun (w)$ if and only if, for all $v$ such that $w\peq v$, if $\varphi\in\labfun(v)$ then $\psi\in\labfun(v)$.
\end{definition}

\subsection{Simulations, immersions and condensations}

As is well-known, truth in intuitionistic models is preserved by bisimulation, and thus this is usually the appropriate notion of equivalence between different models.
However, for our purposes, it is more convenient to consider a weaker notion, which we call {\em bimersion}.

\begin{definition}\label{DefSim}
Given two labeled posets $\A=(W_\iA,\peq_\iA,\labfun_\iA)$ and $\B =(W_\iB ,\peq_\iB ,\labfun_\iB )$
and a relation $R\subseteq  W_\iA\times W_\iB $,
we write $\dom(R)$ for
\[\ens{w\in W_\iA}{\exists v \in W_\iB  ,~ (w,v) \in R}\]
and $\rng(R)$ for $\ens{v \in W_\iB }{\exists w \in W_\iA ,~ (w,v) \in R}$.
A relation ${\simvar} \subseteq W_\iA \times W_\iB $ is
a \emph{simulation} from $\A$ to $\B $ if
$\dom({\simvar})=W_\iA$ and
whenever $w\mathrel\simvar v$,
it follows that $\labfun_\iA(w)=\labfun_\iB  (v)$,
and if $w\peq_\iA w'$ then there is $v'$ so that $v\peq_\iB  v'$ and $w'\mathrel \simvar v'$.

A simulation is called an {\em immersion} if it is a function.
If an immersion $\simvar\colon W_\iA\to W_\iB $ exists, we write $\A\imm \B $.
If, moreover, there is an immersion $\tau\colon W_\iB \to W_\iA$,
we say that they are {\em bimersive},
write $\A\simm \B $, and call the pair $(\simvar,\tau)$ a {\em bimersion}.
A {\em condensation from $\A$ to $\B$} is a bimersion $({\srone},{\srtwo})$
so that $\srone\colon W_\iA\to W_\iB $, $\srtwo\colon W_\iB \to W_\iA$,
$\srone$ is surjective, and $\srone\srtwo$ is the identity on $W_\iB$.
If such a condensation exists we write $\B \cond \A$.
Observe that $\B \cond \A$ implies that $\B \simm \A$. 

If $\M ,\N $ are models and $\Sigma$ a set of formulas closed under subformulas, we write $\M \imm_\Sigma\N $ if $\M ^\Sigma\imm\N ^\Sigma$, and define $\simm_\Sigma,\cond_\Sigma$ similarly. We may also write e.g.~$\A\cond \M $ if $\A$ is $2^\Sigma$-labeled and $\A\cond\M ^\Sigma$. 
\end{definition}

It will typically be convenient to work with immersions rather than simulations: however, as the next lemma shows, not much generality is lost by this restriction.

\begin{lemma}\label{LemmThereIsFun}
  Let $\fulllab\A\iA$ and $\fulllab\B\iB$ be labeled posets.
  If a simulation ${\simvar}\subseteq W_\iA\times W_\iB$ exists,
  $W_\iA$ is a finite tree, and $w\mathrel\simvar w'$,
  then there is an immersion ${\simvar}'\subseteq W_\iA\times W_\iB$
  such that $w\in \dom({\simvar}')$.
\end{lemma}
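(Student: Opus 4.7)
The plan is to build $\simvar'$ by a top-down recursion on the finite tree $W_\iA$, starting from its root $r$. The key observation is that at every node the forth condition of the given simulation $\simvar$ offers at least one admissible partner in $W_\iB$, and the tree structure ensures that choices made along different branches never have to be reconciled against one another.

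Concretely, I would first use $\dom(\simvar) = W_\iA$ to pick any $v_r \in W_\iB$ with $r \mathrel{\simvar} v_r$ and set $\simvar'(r) := v_r$. Then, recursively, whenever $\simvar'(u)$ has already been defined so that $u \mathrel{\simvar} \simvar'(u)$, and $c$ is an immediate tree-successor of $u$, I would apply the forth condition of $\simvar$ to the pair $(u, \simvar'(u))$ together with $u \peq_\iA c$: this yields some $v_c \in W_\iB$ with $\simvar'(u) \peq_\iB v_c$ and $c \mathrel{\simvar} v_c$, and I set $\simvar'(c) := v_c$. Because each non-root element of $W_\iA$ has a unique immediate predecessor in the tree, this prescription determines $\simvar'$ unambiguously on all of $W_\iA$, and finiteness of the tree ensures termination.

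It then remains to verify that $\simvar'$ is an immersion: it is a function with $\dom(\simvar') = W_\iA$ (so in particular $w \in \dom(\simvar')$), the label condition $\labfun_\iA(u) = \labfun_\iB(\simvar'(u))$ is inherited from $\simvar$ at each step since every pair $(u, \simvar'(u))$ lies in $\simvar$, and the forth condition follows by noting that if $u \peq_\iA u'$, the unique tree path from $u$ to $u'$ is a chain of parent-successor edges, along each of which $\simvar'$ preserves $\peq_\iB$ by construction, so transitivity of $\peq_\iB$ yields $\simvar'(u) \peq_\iB \simvar'(u')$. The main subtle point, rather than a genuine obstacle, is the use of the tree hypothesis: in a general labeled poset, the independent choices made at a node reachable by two distinct paths from the root could clash and would need to be reconciled, whereas the uniqueness of paths in a tree sidesteps this issue entirely. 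The hypothesis $w \mathrel{\simvar} w'$ itself is not needed for the construction — it is already subsumed by $\dom(\simvar) = W_\iA$ — and is presumably stated only to flag that $w$ carries a distinguished partner that may be of interest in subsequent applications of the lemma.
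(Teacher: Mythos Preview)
Your argument is correct and essentially the same as the paper's: both construct $\simvar'$ by a top-down recursion on the tree, invoking the forth condition of $\simvar$ at each parent--child edge to choose the image of each node. The only cosmetic difference is that the paper anchors the induction at $w$ itself (so that in fact $\simvar'(w)=w'$) rather than at the root; since in the paper's applications $w$ is the root anyway this is immaterial, and your observation that the hypothesis $w\mathrel\simvar w'$ is redundant for the conclusion as stated is correct.
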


\proof
By a straightforward induction on the depth of $w$.
Let $D$ be the set of daughters of $w$,
and for each $v\in D$, choose $v'$ so that $v\mathrel \simvar v'$ and $w'\peq v$.
By the induction hypothesis, there is an immersion $\simvar_{v}'$ with $v\in \dom(\simvar_{v}')$.
Then, one readily checks that $\{(w,w')\}\cup\bigcup_{v\in D}\simvar_{v}'$
is also an immersion, as needed. 
\endproof

Condensations are useful for producing (small) quasimodels out of models.

\begin{proposition} \label{prop:IL}
  Given
  an intuitionistic dynamic model $\fullmod\M \iM $,
  a set $\Sigma$ of intuitionistic formulas that is closed for subformulas,
  and a $2^\Sigma$-labeled poset $\fulllab\A\iA$ over $\Sigma$,
  if $\A \cond \M$,
  then $\A$ is a quasimodel.
\end{proposition}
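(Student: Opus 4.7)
The plan is to unpack the hypothesis $\A \cond \M^\Sigma$ according to the definition of condensation: it gives a pair of immersions $\srone\colon W_\iM\to W_\iA$ and $\srtwo\colon W_\iA\to W_\iM$, with $\srone$ surjective and $\srone\srtwo = \mathrm{id}_{W_\iA}$. Since both maps are function-simulations, they preserve labels and are monotone for $\peq$; in particular, $\labfun_\iA(w) = \Sigma_\M(\srtwo(w))$ for every $w \in W_\iA$, and $\labfun_\iA(\srone(u)) = \Sigma_\M(u)$ for every $u \in W_\iM$. With these two identities in hand, each quasimodel clause for $\A$ reduces to a property of truth in $\M$.

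For the monotonicity clause, if $w \peq_\iA w'$ then $\srtwo(w) \peq_\iM \srtwo(w')$ because $\srtwo$ is a simulation, and the standard monotonicity of intuitionistic truth then yields $\Sigma_\M(\srtwo(w)) \subseteq \Sigma_\M(\srtwo(w'))$, that is, $\labfun_\iA(w) \subseteq \labfun_\iA(w')$.

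For the $\imp$-clause, fix $\varphi \imp \psi \in \Sigma$ and $w \in W_\iA$. The forward implication is discharged by $\srtwo$: assuming $\varphi \imp \psi \in \labfun_\iA(w) = \Sigma_\M(\srtwo(w))$ and taking $v \succcurlyeq_\iA w$ with $\varphi \in \labfun_\iA(v)$, we have $\srtwo(v) \succcurlyeq_\iM \srtwo(w)$ and $\varphi \in \Sigma_\M(\srtwo(v))$, so the semantics of $\imp$ in $\M$ yields $\psi \in \Sigma_\M(\srtwo(v)) = \labfun_\iA(v)$. For the backward implication, which is where $\srone\srtwo = \mathrm{id}_{W_\iA}$ really earns its keep, assume the quantified condition on the $\A$-side, pick any $u \succcurlyeq_\iM \srtwo(w)$ with $\varphi \in \Sigma_\M(u)$, and set $v = \srone(u)$. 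Then $v = \srone(u) \succcurlyeq_\iA \srone(\srtwo(w)) = w$ because $\srone$ is order-preserving and $\srone\srtwo$ is the identity; moreover $\labfun_\iA(v) = \Sigma_\M(u)$, so $\varphi \in \labfun_\iA(v)$, and the hypothesis yields $\psi \in \labfun_\iA(v) = \Sigma_\M(u)$, as required.

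I anticipate no serious technical difficulty: the only point where one can easily trip is the orientation of $\cond$. It is the asymmetric identity $\srone\srtwo = \mathrm{id}_{W_\iA}$ (and not its companion $\srtwo\srone$) that ensures every $\peq_\iM$-successor of $\srtwo(w)$ can be realised as a $\peq_\iA$-successor of $w$ via $\srone$, and this is exactly what the non-trivial direction of the quasimodel clause for $\imp$ demands.
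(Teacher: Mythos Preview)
Your proof is correct and follows essentially the same approach as the paper: both use $\srtwo$ for monotonicity and the forward direction of the implication clause, and both use $\srone$ together with $\srone\srtwo = \mathrm{id}_{W_\iA}$ for the backward direction. The only cosmetic difference is that the paper argues the backward direction by contrapositive (starting from $\varphi\to\psi\notin\labfun_\iA(w)$), whereas you argue it directly, but the underlying step is identical.
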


\fullproof{
\begin{proof}
Let $(\srone,\srtwo)$ be a condensation from $\model^\Sigma$ to $\A$. If $w\peq_\iA v $, then $\srtwo(w)\peq_\iM \srtwo(v)$, so that $\labfun(w)=\Sigma(\srtwo(w))\subseteq \Sigma( \srtwo(v))=\labfun(v)$. Next, suppose that $\varphi\to\psi\in \labfun(w)$, and consider $v$ such that $w\peq_\iA v$. Then, $\M , \srtwo(w) \sat \varphi \imp \psi$. Since $\srtwo$ is an immersion, $\srtwo(w)\peq_\iM \srtwo(v)$, hence if $\M , \srtwo(v) \sat \varphi  $, then also $\M , \srtwo(v) \sat   \psi$. Thus if $\varphi\in\labfun_\iA(v)$, it follows that $\psi\in\labfun_\iA(v)$.
Finally, suppose that $\varphi\to\psi \in \Sigma\setminus \labfun(w)$. Then, $\M,\srtwo(w)\nsat \varphi\to\psi$, so that there is $v \in W_\iA$ such that $\srtwo(w) \irel_\iA v$,
  $\M, v \sat \varphi$ and $\M, v \nsat \psi$.
  It follows that $\varphi \in \labfun(\srone (v))$ and $\psi \not\in \labfun(\srone (v))$, and since $\srone$ is an immersion we also have that $w=\srone\srtwo(w)\peq \srone(v)$, as needed.
\joseph{We can not omit this proof because it is mentionned in the proof of \ref{lem:transformations}.}
\end{proof}
}
\shortproof{
\begin{proof}
See Appendix.
\end{proof}
}

\subsection{Normalized labeled trees}

In order to
count the number of different labeled trees up to bimersion,
we construct,
for any set $\Labels$ of labels and any $k \ge 1$,
the labeled directed acyclic graph $\bgraphbase{k}$ by induction on $k$ as follows.

\subparagraph*{Base case.}
For $k = 1$, let $\bgraphbase 1$ with $W^\Labels_{1} = L$, $\isuccrel^\Labels_{1} = \varnothing$, and $\labfun^\Labels_{1}(w) = w $ for all $ w \in W^\Labels_{1}.$

\subparagraph*{Inductive case.}
Suppose that $\bgraphbase{k}$ has already been defined.The graph $\bgraphbase{k+1}$ is constructed such that:
\begin{align*}
  W^\Labels_{k+1} &= W^\Labels_{k} \cup P \\
  \isuccrel^\Labels_{k+1} &= \isuccrel^\Labels_{k} \cup
    \ens{(x,y) \in W^\Labels_{k+1} \times W^\Labels_{k+1}}{\exists (\ell,S) \in P ,~
    x \in S \text{ and } y = (\ell, S)} \\
  \labfun^\Labels_{k+1}(w) &= \begin{cases}
      \labfun^\Labels_{k}(w) &\text{if } w \in W^\Labels_{k} \\
      \ell       &\text{if } w = (\ell, S) \in P
    \end{cases}
    \end{align*}
where $P = \ens{(\ell, S) \in L \times \parts{W^\Labels_{k}}}{ 
    \forall y \in S,~ \labfun^\Labels_{k}(y) \ne \ell}$.

Note that $\bgraphbase k$ is typically not a tree, but we may unravel it to obtain one.

\begin{definition}
Given a  labeled directed acyclic graph $\G  = (W, \isuccrel, \labfun)$ and a node $w \in W$,
the \emph{unraveling} of $\G $ from $w$ is
the labeled tree $\ur\T  =(\ur W, \ur\isuccrel, \ur\labfun)$ such that
$\ur W$ is the set of all the paths from $w$ in $\G$,
$\xi \mathrel{\ur\isuccrel}\zeta $ if and only if there is $v \in W $
such that $ \zeta = \xi v$, and $\ur\labfun(v_0 \ldots v_n) = \labfun(v_n)$.
\end{definition}

\begin{proposition} \label{PropIsBisim}
  For any rooted labeled tree
  $\T = (W, \isuccrel, \labfun)$ over a set $\Labels$ of labels,
  if the level of $\T$ is finite then
  there is a condensation from $\T$ to an unraveling of $\btree{\degp \T}$.
\end{proposition}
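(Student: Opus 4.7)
The plan is to proceed by induction on $k := \degp \T$.

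\emph{Base case} ($k = 1$): Every element of $W$ carries the common label $\ell_0 := \labfun(r)$, and $\btree{1}$ consists of $\Labels$ as isolated vertices, so the unraveling of $\btree{1}$ from $\ell_0$ is the single-vertex tree whose unique node is the length-one path $(\ell_0)$. Setting $\rho(w) := (\ell_0)$ for every $w \in W$ and $\iota((\ell_0)) := r$ gives a condensation: both maps trivially preserve labels and the order, and $\rho\iota$ is the identity.

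\emph{Inductive step} ($k > 1$): Assume the result for rooted labeled trees of level strictly less than $k$. The key device is a canonical-representative map $h \colon W \to W^{\Labels}_{k}$ defined along the $\T$-ancestry of each node: put $h(r) := \ell_0 \in W^{\Labels}_{1}$, and for $w \neq r$ with parent $p$ in $\T$, put $h(w) := h(p)$ whenever $\labfun(w) = \labfun(p)$, and $h(w) := (\labfun(w), \{h(p)\})$ otherwise. A straightforward induction on the number $j$ of label changes along the root-to-$w$ path shows that $h(w) \in W^{\Labels}_{j+1}$ and that $\labfun^{\Labels}_{k}(h(w)) = \labfun(w)$, using that $\labfun(h(p)) = \labfun(p) \neq \labfun(w)$ ensures membership in the set $P$ at stage $j+1$.

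Using $h$, for each $w \in W$ extract from the root-to-$w$ path in $\T$ its maximal strictly-label-alternating subsequence $r = u_{i_0} \prec u_{i_1} \prec \cdots \prec u_{i_m}$ (with $\labfun(u_{i_m}) = \labfun(w)$), and define $\rho(w) := (h(u_{i_0}), h(u_{i_1}), \ldots, h(u_{i_m}))$. A direct calculation shows that $h(u_{i_{j+1}}) = (\labfun(u_{i_{j+1}}), \{h(u_{i_j})\})$, so $(h(u_{i_j}), h(u_{i_{j+1}}))$ is an edge of $\btree{k}$ and $\rho(w)$ is a genuine path starting at $h(r)$. Take the target unraveling to be that of $\btree{k}$ from $h(r)$, and define $\iota$ as a section of $\rho$: for each path $p$ in the image of $\rho$, designate a canonical preimage in $W$, invoking the inductive hypothesis applied to each subtree $\T_v$ with $v \in D_r$ (the set of first-label-change descendants of $r$) to supply preimages for the paths extending past $h(r)$.

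The verifications that $\rho$ and $\iota$ are immersions — label- and order-preserving — and that $\rho \iota$ is the identity are routine from the construction. The main obstacle is to show that $\rho$ is \emph{surjective} onto the whole unraveling from $h(r)$: every path in that unraveling must arise as $\rho(w)$ for some $w \in W$. I plan to prove this by induction on path length, with $(h(r)) = \rho(r)$ as the base, and with longer paths being realized by extending along the inductively-constructed condensations for the subtrees $\T_v$, showing that the relevant edges leaving $h(r)$ correspond exactly to the first-label-change descendants of $r$ so that nothing outside the image of $h$ contributes to the unraveling reachable from $h(r)$.
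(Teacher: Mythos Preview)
Your surjectivity claim is where the argument breaks, and it is not a detail that can be patched within this framework. You unravel $\G^{\Labels}_{k}$ from $h(r)=\ell_0\in W^{\Labels}_{1}=\Labels$, and you want $\rho$ to hit \emph{every} path starting at $\ell_0$. But in $\G^{\Labels}_{k}$ (for $k\ge 2$) there is an edge from $\ell_0$ to \emph{every} node of the form $(\ell,S)$ with $\ell_0\in S$ and $\ell\notin\labfun^{\Labels}_{k-1}[S]$; in particular to nodes with $|S|>1$, or with $S$ containing labels that never occur in $\T$ at all. Your map $h$ only ever produces pairs with a \emph{singleton} second component, so the image of $\rho$ lies inside the very thin sub-tree of the unraveling consisting of paths through singleton-set nodes. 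The path $\bigl(\ell_0,(\ell_1,\{\ell_0,\ell_2\})\bigr)$, for any $\ell_1\notin\{\ell_0,\ell_2\}$, is a perfectly good element of the unraveling from $\ell_0$ that $\rho$ can never reach. Hence $\rho$ is not surjective and $(\rho,\iota)$ is not a condensation. Your proposed inductive argument for surjectivity (``the relevant edges leaving $h(r)$ correspond exactly to the first-label-change descendants of $r$'') is exactly the step that fails: the edges leaving $\ell_0$ in $\G^{\Labels}_{k}$ are determined by $\G^{\Labels}_{k}$ alone and know nothing about $\T$.

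The paper avoids this by choosing the root of the unraveling to \emph{encode} the branching structure of $\T$. It first applies the induction hypothesis to each subtree rooted at a first-label-change node $w\in N$, obtaining a condensation onto an unraveling of $\G^{\Labels}_{n-1}$ from some $y_w$, and then sets $r'=\bigl(\labfun(r),\{y_w: w\in N\}\bigr)$. The point is that the second component of $r'$ records precisely which children are available, so the unraveling from $r'$ has exactly the successors needed and no spurious ones; surjectivity then follows from the inductive condensations. If you want to salvage your construction, you must similarly let $h(w)$ depend on the \emph{set} of label-change successors of $w$ in $\T$, not just on its single parent.
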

\begin{proof}
  Let $\T = (W, \isuccrel, \labfun)$ be a labeled directed acyclic graph with root $r$.
  We write $\irelneq$
  for the transitive closure of $\isuccrel$
  and $\irel$
  for the reflexive closure of $\irelneq$.
  The proof is by induction on the level $n = \degp \T$ of $\T$.
  For $n = 1$, observe that this means that $\labfun(w)=\labfun(r)$ for all $w\in W$.
  Let $\srone = W\times \{\labfun(r)\}$ and $\srtwo=\{(\labfun(r),r)\}$.
  It can easily be checked that
  $(\srone,\srtwo)$ is a condensation.
  For $n > 1$, suppose the property holds for all rooted labeled trees $\T'$ such that
  $\degp \T' < n$.
  Define the following sets:
  \begin{align*}
    N &= \ens{w \in W}{\labfun(w) \neq \labfun(r)
            \text{ and for all } v \irelneq w ,~ \labfun(v) = \labfun(r)} \\
    S &= \ens{w \in W}{\text{for all } v \irel w ,~ \labfun(v) = \labfun(r)}
  \end{align*}
  Clearly, for all $w \in N$, $\degp w < n$.
  Therefore, by induction, there is a condensation $({\srone}_w,{\srtwo}_w)$
  from the subgraph of $\T$ generated by $w$
  to the unraveling of $\G^\Labels_{n-1}$ from some $y_w \in W^\Labels_{n-1}$.
  Let us define $r' = (\labfun(r), \ens{y_w}{w \in N})$
  and consider the unraveling $\G$ of $\G^\Labels_n$ from $r'$.
  It can easily be checked that
  ${\srone} = \left(S \times \left\{r'\right\} \right) \cup \bigcup_{w \in W} {\srone}_w$
  is an immersion from $\T$ to~$\G$,
  ${\srtwo'} = \{(r',r)\} \cup \bigcup_{w \in W} {\srtwo}_w$
  is a simulation from $\G$ to~$\T$
  and $\srtwo'\subseteq \srone^{-1}$.
  Using Lemma~\ref{LemmThereIsFun}, we can then choose an immersion $\srtwo\subseteq \srtwo'$, so
  that $(\srone,\srtwo)$ is a condensation from $\T$ to~$\G$.
\end{proof}

\newcommand{\nre}[2]{E^{#1}_{#2}}
\newcommand{\nrq}[2]{Q^{#1}_{#2}}

Finally, let us define recursively $\nre{n}{k}$ and $\nrq{n}{k}$ for all $n,k\in\nat$ by:

\begin{equation*}
  \nre{n}{k} = \begin{cases}
    0 &\text{if } k = 0 \\
    \nre{n}{k-1} + n 2^{\nre{n}{k-1}} &\text{otherwise}
  \end{cases}
  \qquad
  \nrq{n}{k} = \begin{cases}
    0 &\text{if } k = 0 \\
    1 + \nre{n}{k-1} \nrq{n}{k-1} &\text{otherwise}
  \end{cases}
\end{equation*}

The following lemma can be proven by a straightforward induction, left to the reader.

\begin{lemma}\label{LemmBoundsGraph}

  For any finite set $\Labels$ with cardinality $n$ and all $k \in \nat$,
\begin{enumerate*}

\item
  the cardinality of $\bgraph{k}$ is bounded by $\nre{n}{k}$, and
  
\item 
  the cardinality of any unraveling of $\bgraph k$
  is bounded by $\nrq{n}{k}$.

\end{enumerate*}

\end{lemma}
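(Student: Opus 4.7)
The plan is to prove each bound by induction on $k$, mirroring the inductive construction of $\bgraph{k}$.

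For item~1, the base case $k=1$ is immediate: $|W^\Labels_{1}|=|\Labels|=n=\nre{n}{1}$. For the inductive step, I would use the decomposition $W^\Labels_{k+1}=W^\Labels_{k}\cup P$ together with the inclusion $P\subseteq \Labels\times\mathcal{P}(W^\Labels_{k})$, which yields $|P|\le n\cdot 2^{|W^\Labels_{k}|}$. Applying the induction hypothesis gives $|W^\Labels_{k+1}|\le \nre{n}{k}+n\cdot 2^{\nre{n}{k}}$, which is precisely $\nre{n}{k+1}$.

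For item~2, I would again induct on $k$, counting the number of paths emanating from the chosen root of the unraveling. The base $k=1$ is immediate since $\bgraph{1}$ has no edges, so any unraveling consists of the single length-zero path, matching $\nrq{n}{1}=1$. For the inductive step on $k+1$, I would split a path from the root $w$ into two alternatives: the trivial length-zero path (contributing the $1$ term in the recurrence), or a nontrivial path that begins with an edge from $w$ to some neighbor $y$ and then continues as a path from $y$. By the construction of $\bgraph{k+1}$, the branching at $w$ is controlled by the size of the previous layer, which by item~1 is at most $\nre{n}{k}$; the subtree rooted at each successor $y$ is then an unraveling of a subgraph already present in $\bgraph{k}$, whose size the induction hypothesis bounds by $\nrq{n}{k}$. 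Combining the two contributions gives at most $1+\nre{n}{k}\cdot\nrq{n}{k}=\nrq{n}{k+1}$.

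The main obstacle, and the step requiring the most care, is justifying that extending a path past the first edge from $w$ really does restrict the remainder of the walk to an instance of $\bgraph{k}$ (so that the induction hypothesis applies). This amounts to unwinding the layered construction and observing that the edges added at stage $k+1$ are incident only to the freshly introduced vertices, so that after a single step from the root we are confined to the older layers forming $\bgraph{k}$. Once this layering is recorded, both bounds follow immediately from the recursive definitions of $\nre{n}{\cdot}$ and $\nrq{n}{\cdot}$, as the author indicates.
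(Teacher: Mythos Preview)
Your proposal is correct and is precisely the ``straightforward induction'' the paper leaves to the reader; the paper gives no further argument, so there is nothing to compare beyond noting that your induction on $k$, splitting $W^\Labels_{k+1}$ as $W^\Labels_k\cup P$ for item~1 and counting paths via the layered edge structure for item~2, is the intended proof. One small clarification worth making explicit in item~2: the case analysis depends on whether the root $w$ lies in $P$ or already in $W^\Labels_k$ (in the latter case all successors of $w$ already lie in $W^\Labels_k$, so the unraveling coincides with that of $\bgraph{k}$ and the bound follows directly from the induction hypothesis); your bound $1+\nre{n}{k}\cdot\nrq{n}{k}$ covers both cases, but spelling out the split makes the ``confined to $\bgraph{k}$ after one step'' claim fully precise.
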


From these and Proposition \ref{PropIsBisim}, we obtain the following:

\begin{theorem}\label{TheoKruskal}

\begin{enumerate}

\item 
Given a set of labels $\Labels$ and a $\Labels$-labeled tree $\T$ of level $k<\omega$, there is $\T'\simm \T$ bounded by $\nrq{|\Labels|}{k}$. We call $\T'$ the {\em normalized $\Labels$-labeled tree for $\T$.}

\item \label{ItKruskal} Given a sequence of $\Labels$-labeled trees $\T_1,\hdots,\T_n$ of level $k<\omega$ with $n > \nre{|\Labels|}{k}$, there are indexes $i<j\leq n$ such that $\T_i\simm \T_j$.

\end{enumerate}

\end{theorem}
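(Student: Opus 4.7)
Both items follow relatively directly from Proposition~\ref{PropIsBisim} and Lemma~\ref{LemmBoundsGraph}; the work is primarily combinatorial bookkeeping.

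For the first item, the plan is to apply Proposition~\ref{PropIsBisim} to obtain a condensation from $\T$ to an unraveling $\T'$ of $\bgraph{k}$. By the observation following Definition~\ref{DefSim}, any condensation is in particular a bimersion, so $\T' \simm \T$. Since $\T'$ is an unraveling of $\bgraph k$, Lemma~\ref{LemmBoundsGraph} bounds $|\T'|$ by $\nrq{|\Labels|}{k}$, and this $\T'$ may be taken as the normalized $\Labels$-labeled tree for $\T$.

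For the second item, the plan is a pigeonhole argument. First, I would quickly verify that $\simm$ is transitive: the composition of two immersions is again an immersion, since functions preserving labels and the underlying order are closed under composition, and composing two bimersions component-wise then yields a bimersion (symmetry is immediate by swapping the two components). Next, I would apply the first part to each $\T_i$ to obtain a normalized tree $\T_i'$, which, by inspection of the construction in Proposition~\ref{PropIsBisim}, is literally the unraveling of $\bgraph{k}$ rooted at some specific node $r_i \in W^\Labels_{k}$. Since Lemma~\ref{LemmBoundsGraph} bounds $|W^\Labels_{k}|$ by $\nre{|\Labels|}{k}$, and we are given $n > \nre{|\Labels|}{k}$ trees, by the pigeonhole principle there exist indices $i < j \le n$ with $r_i = r_j$. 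Then $\T_i'$ and $\T_j'$ are definitionally the same unraveling, so $\T_i \simm \T_i' = \T_j' \simm \T_j$, and transitivity yields $\T_i \simm \T_j$ as desired.

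The main obstacle I anticipate is ensuring that the recursive construction of Proposition~\ref{PropIsBisim} does indeed produce an unraveling rooted at a well-identified vertex of $\bgraph{k}$; for trees of level exactly $k$ this follows directly from the inductive step, while for trees of level strictly less than $k$ one must note that $W^\Labels_{n} \subseteq W^\Labels_{k}$ for $n \le k$, so the root still lies in $W^\Labels_{k}$ and the pigeonhole count is unaffected. Beyond this, the argument is essentially a clean application of the two preceding results.
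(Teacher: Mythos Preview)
Your proposal is correct and follows exactly the approach the paper intends: the paper does not give an explicit proof but simply states that the theorem is obtained ``from these and Proposition~\ref{PropIsBisim}'', i.e., from Proposition~\ref{PropIsBisim} together with Lemma~\ref{LemmBoundsGraph}. Your write-up fills in precisely the details the paper omits (condensation $\Rightarrow$ bimersion for item~1, and the pigeonhole on roots in $W^\Labels_k$ for item~2), including the observation that if two trees yield the same root $r_i=r_j\in W^\Labels_k$ then their normalized trees coincide.
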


The second item may be viewed as a finitary variant of Kruskal's theorem for labeled trees \cite{Kruskal1960}. When applied to quasimodels, we obtain the following:

\begin{proposition}\label{PropBound}

Let $\Sigma$ be a set of formulas closed under subformulas with $|\Sigma|=s <\omega$.

\begin{enumerate}

\item 
Given a tree-like $\Sigma$-quasimodel $\T$ and a formula $\varphi$, there is a tree-like $\Sigma$-quasimodel $\T'\simm_{\Sigma}\T$ bounded by $\nrq{2^{s}}{s+1}$. We call $\T'$ the {\em normalized $\Sigma$-quasimodel for $\T$.}

\item Given a sequence of tree-like $\Sigma$-quasimodels $\T_1,\hdots,\T_n$ with $n > \nre{2^{s}}{s+1}$, there are indexes $i<j\leq n$ such that $\T_i\simm\T_j$.

\end{enumerate}
\end{proposition}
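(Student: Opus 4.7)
The plan is to reduce both claims to Theorem~\ref{TheoKruskal} applied with label set $\Labels=2^{\Sigma}$ (of cardinality $2^{s}$) and level bound $k=s+1$. The key preliminary step is bounding the level of a tree-like $\Sigma$-quasimodel: since $\labfun$ is monotone, along any increasing chain $v_1\prec\cdots\prec v_m$ in $\T$ the labels form an ascending sequence $\labfun(v_1)\subseteq\cdots\subseteq\labfun(v_m)$ of subsets of $\Sigma$, and if the chain is \emph{proper} these inclusions are strict. Since $|\Sigma|=s$, any such strictly increasing chain of subsets has length at most $s+1$, so $\degp{\T}\le s+1$.

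For part~(1), I would apply the first assertion of Theorem~\ref{TheoKruskal} to the underlying $2^{\Sigma}$-labeled tree of $\T$, yielding a normalized tree $\T'$ of cardinality at most $\nrq{2^{s}}{s+1}$ together with a witness $\T'\simm\T$. Inspecting the proof of Proposition~\ref{PropIsBisim} shows that this witness is in fact a condensation $(\srone,\srtwo)$ with $\srone\srtwo$ the identity on $\T'$. It then remains to verify that $\T'$ inherits the quasimodel structure, which is a direct adaptation of Proposition~\ref{prop:IL}: monotonicity of $\labfun$ on $\T'$ follows from $\srtwo$ being label- and order-preserving; for the implication clause on $\varphi\to\psi\in\Sigma$ at a node $w'\in\T'$, the forward direction transports the premise through $\srtwo$ to $\T$, while the backward direction exploits that for any $v\succeq\srtwo(w')$ in $\T$, the point $\srone(v)$ satisfies $w'=\srone\srtwo(w')\peq\srone(v)$ and carries the same label as $v$, so the quasimodel hypothesis on $\T'$ relays back to $\T$.

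For part~(2), I would apply the second assertion of Theorem~\ref{TheoKruskal} directly: each $\T_i$ is a $2^{\Sigma}$-labeled tree of level at most $s+1$, and since $n>\nre{2^{s}}{s+1}$, we obtain indices $i<j$ with $\T_i\simm\T_j$. The only real obstacle is the bookkeeping in part~(1): one must upgrade the abstract bimersion of Theorem~\ref{TheoKruskal} to a condensation (which is already implicit in the proof via Proposition~\ref{PropIsBisim}) and then transfer Proposition~\ref{prop:IL} to the quasimodel-to-quasimodel setting. Neither step introduces any genuine difficulty once the level bound $s+1$ is secured.
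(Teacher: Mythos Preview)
Your proposal is correct and follows essentially the same approach as the paper, which simply cites Proposition~\ref{prop:IL}, Lemma~\ref{LemmBoundsGraph}, and the level bound $s+1$. You are in fact more careful than the paper on two points it leaves implicit: that Theorem~\ref{TheoKruskal} (via Proposition~\ref{PropIsBisim}) actually yields a \emph{condensation} rather than a mere bimersion, and that the argument of Proposition~\ref{prop:IL} must be transported from the model-to-labeled-poset setting to the quasimodel-to-quasimodel setting.
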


\proof
Immediate from Proposition \ref{prop:IL} and Lemma \ref{LemmBoundsGraph} using the fact that any
$\Sigma$-quasimodel has level at most $s+1$.
\endproof

Finally, we obtain an analogous result for pointed structures.

\begin{definition}
A {\em pointed labeled poset} is a structure $(W,\peq,\labfun,w)$ consisting of a labeled tree with a designated world $w\in W$. Given a labeled poset $\A=(W_\iA,\peq_\iA,\labfun_\iA)$ and $w\in W_\iA$, we denfine a pointed, labeled poset $\A^w = (W_\iA,\peq_\iA,\labfun_\iA, w)$. A {\em pointed simulation} between pointed labeled posets $\A=(W_\iA,\peq_\iA,\labfun_\iA,w_\iA)$ and $\B=(W_\iB,\peq_\iB,\labfun_\iB,w_\iB)$ is a simulation $\simvar\subset W_\iA\times W_\iB$ such that if $w \mathrel \simvar v$, then $w=w_\iA$ if and only if $v=w_\iB$. The notions of {\em pointed immersion,} {\em pointed condensation,} etc.~are defined analogously to Definition \ref{DefSim}.
\end{definition}

\begin{lemma}\label{LemmPointedBound}
If $\Labels$ has $n$ elements, any pointed $\Labels$-labeled poset of level at most $k$ condenses to a labeled pointed tree bounded by $\nrq{2n}{k+2}$, and there are at most $\nre{2n}{k+2}$ bimersion classes.
\end{lemma}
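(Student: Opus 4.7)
The plan is to reduce to the unpointed case already handled by Theorem~\ref{TheoKruskal} and Lemma~\ref{LemmBoundsGraph}, by encoding the designated point into the labeling. Given a pointed $\Labels$-labeled poset $\A=(W,\peq,\labfun,w^*)$ of level at most~$k$, I define $\labfun'\colon W\to \Labels\times\{0,1\}$ by $\labfun'(w^*)=(\labfun(w^*),1)$ and $\labfun'(u)=(\labfun(u),0)$ for $u\ne w^*$, yielding an unpointed labeled poset $\A'=(W,\peq,\labfun')$ over a set of labels of size at most $2n$. Since only $w^*$ carries the flag~$1$ in $\A'$, any label-preserving simulation between two such encoded posets automatically sends designated points to designated points; consequently, pointed simulations, immersions, bimersions, and condensations between pointed $\Labels$-labeled posets correspond bijectively to their ordinary counterparts between the encoded $(\Labels\times\{0,1\})$-labeled posets.

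The key technical step is the level estimate $\degp{\A'}\le k+2$. I would establish it by examining any proper increasing chain $v_1\prec\cdots\prec v_m$ in $\A'$. If $w^*$ does not appear in the chain, every $v_i$ carries flag~$0$ and the chain is already proper in $\A$, giving $m\le k$. Otherwise $w^*=v_j$ for some~$j$, and after deleting $v_j$ the only transition in the remaining chain that might fail to be proper in~$\A$ is the one from $v_{j-1}$ to $v_{j+1}$. If $\labfun(v_{j-1})\ne\labfun(v_{j+1})$, the length-$(m-1)$ chain is proper in $\A$ and $m\le k+1$; otherwise $\labfun(v_{j-1})=\labfun(v_{j+1})\ne\labfun(v_{j+2})$, the latter inequality being inherited from properness of the original chain in $\A'$, so deleting $v_{j+2}$ as well produces a proper chain in $\A$ of length $m-2$, forcing $m\le k+2$. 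The boundary cases $j\in\{1,m\}$ are similar and easier.

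Finally, I apply Theorem~\ref{TheoKruskal} (together with Proposition~\ref{PropIsBisim}) to $\A'$, whose label set has size at most $2n$ and whose level is at most $k+2$. This produces a condensation of $\A'$ to a normalized tree of cardinality at most $\nrq{2n}{k+2}$ and bounds the number of bimersion classes of such structures by $\nre{2n}{k+2}$. Translating back through the encoding, the image of $w^*$ under the condensation is the unique element of the target tree carrying flag~$1$, hence canonically designated; this yields the desired pointed condensation into a pointed labeled tree of size at most $\nrq{2n}{k+2}$, and the class bound transfers because distinct pointed bimersion classes necessarily map to distinct ordinary bimersion classes under the encoding. The main obstacle is precisely the level estimate above, which accounts for the `$+2$' in the exponent; the rest is a mechanical translation between the pointed and unpointed frameworks.
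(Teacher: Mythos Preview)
Your approach is essentially identical to the paper's: encode the designated point via the label set $\Labels\times\{0,1\}$, observe that the level increases by at most~$2$, apply Proposition~\ref{PropIsBisim}, and then read off the pointed condensation from the fact that the flag~$1$ is preserved by any label-preserving map. The paper's proof is terser on the level estimate (it merely exhibits the worst case $u\peq w^*\peq v$ with $\labfun(u)=\labfun(w^*)=\labfun(v)$), while you spell out the chain argument in full; note, however, that in your second subcase you should delete $v_{j+1}$, not $v_{j+2}$, to obtain the proper chain $v_1,\ldots,v_{j-1},v_{j+2},\ldots,v_m$ in~$\A$.
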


\proof
We may view a pointed labeled poset $\A=(W,\peq,\labfun,w)$ as a (non-pointed) labeled poset as
follows. Let $\Labels'=\Labels\times \{0,1\}$. Then, set $\labfun'(v)=(\labfun(v),0)$ if $v\ne w$,
$\labfun'(w)=(\labfun(w),1)$. Note that $\A$ may now have level $k+2$, since we may have that $u\peq w\peq v$, $\labfun(u)=\labfun(w)=\labfun(v)$, yet $\labfun'(u)\ne\labfun'(w)$ and $\labfun'(w)\ne\labfun'(v)$. By Proposition \ref{PropIsBisim}, $\A$ condenses to a generated tree $\T$ of $\G^{\Labels'}_{k+2}$ by some condensation $({\srone},{\srtwo})$. Let $w'=\srone(w)$, and consider $\T$ as a pointed structure with distinguished point $w'$. Given that $\srone$ is a surjective, label-preserving function, $w,w'$ are the only points whose label has second component $1$, and therefore $({\srone},{\srtwo})$ must be a pointed condensation, as claimed.
\endproof

\begin{proposition} \label{PropBoundPointed}
  Let $\Sigma$ be a set of formulas closed under subformulas with $|\Sigma|=s <\omega$.
  \begin{enumerate}
    \item Given a tree-like pointed $\Sigma$-quasimodel $\T$
          and a formula $\varphi$,
          there is a tree-like pointed $\Sigma$-quasimodel $\T'\simm \T$ bounded by $\nrq{2^{s+1}}{s+3}$.
          We call $\T'$ the {\em normalized pointed $\Sigma$-quasimodel for $\T$.}

    \item Given a sequence of tree-like pointed $\Sigma$-quasimodels $\T_1,\hdots,\T_n$ with $n > \nre{2^{s+1}}{s+3}$,
          there are indexes $i<j\leq n$ such that $\T_i\simm\T_j$.
  \end{enumerate}
\end{proposition}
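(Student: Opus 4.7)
The plan is to mirror the proof of Proposition~\ref{PropBound}, but substitute Lemma~\ref{LemmPointedBound} for Lemma~\ref{LemmBoundsGraph} so as to carry along the distinguished point. The first step is the observation that any $\Sigma$-quasimodel has level at most $s+1$: along any strictly $\peq$-increasing chain, monotonicity of $\labfun$ forces labels to form a non-decreasing chain of subsets of $\Sigma$, and along a \emph{proper} chain consecutive labels must differ, hence strictly increase as subsets; so chains of distinct labels have length at most $|\Sigma|+1 = s+1$.

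Now view the tree-like pointed $\Sigma$-quasimodel $\T$ as a pointed $2^\Sigma$-labeled poset, where $|2^\Sigma| = 2^s$ and the level is bounded by $s+1$. Applying Lemma~\ref{LemmPointedBound} with $n = 2^s$ and $k = s+1$ yields a pointed condensation $(\srone,\srtwo)$ from $\T$ to a pointed labeled tree $\T'$ whose size is bounded by $\nrq{2 \cdot 2^s}{(s+1)+2} = \nrq{2^{s+1}}{s+3}$, and shows that the total number of bimersion classes of such pointed posets is at most $\nre{2^{s+1}}{s+3}$. The second item of the proposition then follows immediately by the pigeonhole principle applied to the sequence $\T_1,\ldots,\T_n$.

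The main obstacle, and essentially the only thing to verify beyond invoking Lemma~\ref{LemmPointedBound}, is that the tree $\T'$ produced by the condensation is still a quasimodel. This is the analog of Proposition~\ref{prop:IL} applied at the level of quasimodels rather than models, and the argument is parallel. Monotonicity of $\labfun_{\T'}$ transfers from $\T$ because $\srtwo$ preserves both $\peq$ and labels, and $\srone$ is a surjective, label-preserving function. For the implication clause, suppose $\varphi \to \psi \in \labfun_{\T'}(w)$ and $w \peq_{\T'} v$ with $\varphi \in \labfun_{\T'}(v)$; then $\varphi \to \psi \in \labfun_\T(\srtwo(w))$ and $\srtwo(w) \peq_\T \srtwo(v)$ with $\varphi \in \labfun_\T(\srtwo(v))$, so the quasimodel condition for $\T$ gives $\psi \in \labfun_\T(\srtwo(v)) = \labfun_{\T'}(v)$. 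Conversely, if $\varphi \to \psi \notin \labfun_{\T'}(w)$, then $\varphi \to \psi \notin \labfun_\T(\srtwo(w))$, so there is $u \succcurlyeq_\T \srtwo(w)$ with $\varphi \in \labfun_\T(u)$ and $\psi \notin \labfun_\T(u)$; setting $v = \srone(u)$ we get $w = \srone \srtwo(w) \peq_{\T'} v$ with $\varphi \in \labfun_{\T'}(v)$ and $\psi \notin \labfun_{\T'}(v)$, as required. So $\T'$ is a pointed $\Sigma$-quasimodel, completing the first part.
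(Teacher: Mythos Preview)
Your proof is correct and follows exactly the approach the paper intends: the paper states Proposition~\ref{PropBoundPointed} without proof, leaving it as the obvious pointed analogue of Proposition~\ref{PropBound} via Lemma~\ref{LemmPointedBound}, and that is precisely what you carry out. Your explicit verification that the condensed structure $\T'$ is again a quasimodel is a welcome addition, since the paper's appeal to Proposition~\ref{prop:IL} in the non-pointed case is, strictly speaking, stated for models rather than quasimodels; your direct argument using only the quasimodel axioms of $\T$ and the condensation properties of $(\srone,\srtwo)$ closes that small gap cleanly.
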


With these tools at hand, we are ready to prove that $\iltl$ has the strong finite model property, and hence is decidable.

\section{Decidability} \label{SecDec}

The following transformations are defined
for any stratified model $\model$ and any finite, non-empty set of formulas $\Sigma$ closed
under subformulas. In each case, given a stratified model $\model=(W,{\peq},\msuccfct,V)$, we will
produce another stratified model $\model'=(W',{\peq}',\msuccfct',V')$ and a map $\pi\colon W'\to W$
such that $\Sigma_\model(w)=\Sigma_{\model'}(\pi(w))$ for all $w\in W'$.

\subparagraph*{Replace $\model_k$ with a copy of the normalized $\Sigma$-quasimodel of $\model_k$.}
Let $\T = \left(W_\T, \isuccrel_\T, \labfun_\T\right)$ be a copy of the normalized labeled tree of $\model^\Sigma_k$
such that $W_\T \cap W = \emptyset$,
and $(\srone,\srtwo)$ the condensation from $\model^\Sigma_k$ to $\T$.
The result of the transformation is the tuple $(W', \irel', \msuccfct', V')$ such that
$W' = W \cup W_\T \setminus W_k$, $\irel' = \reduc{\irel}{W \setminus W_k} \cup \left(\isuccrel_\T\right)^*$,
\[
  \msuccfct'(w) = \begin{cases}
    \srone\left(\msuccfct\left(w\right)\right) &\text{if } \msuccfct(w) \in W_k \\
    \msuccfct\left(\srtwo\left(w\right)\right) &\text{if } w \in W_\T \\
    \msuccfct(w) &\text{otherwise}
  \end{cases} \qquad
  V'(w) = \begin{cases}
    \ens{p}{p \in \labfun_T(w)} &\text{if } w \in W_\T \\
    V(w) &\text{otherwise}
  \end{cases}
\]
The map $\pi$ is the identity on $W'_i=W_i$ for $i\not=k$, and $\pi(w)=\srtwo(w)$ for $w\in W'_k$.

\subparagraph*{Replace $\model_k$ with a copy of the normalized, pointed $ \Sigma$-quasimodel of $\model_k$ preserving $w$,}
where $w \in W_k$.
The transformation is similar to the previous one except that
$\model_k$ is regarded as a pointed structure with distinguished point $w$.

\subparagraph*{Replace $\model_\ell$ with $\model_k$,}
where $k < \ell$ and there is an immersion $\simvar\colon W_k\to W_\ell$ (seen as $2^\Sigma$-labeled trees).
The result of the transformation is the tuple $(W', \irel', \msuccfct', V')$ such that $W' = W \setminus \bigcup_{k < m \le \ell} W_m,$ $\irel' = \reduc{\irel}{W'}$,
\[\msuccfct'(w) =
    \begin{cases}
      \msuccfct\left(\simvar\left(w\right)\right) &\text{if } \msuccfct(w) \in W_k \\
      \msuccfct(w) &\text{otherwise}
    \end{cases}\]
and $ V' = \reduc{V}{W'}$.

The map $\pi$ is the identity on $W'_i = W_i$ for $i < k$, on $W'_i = W_{i+\ell-k}$ for $i > k$,
and $\pi(w)=\simvar(w)$ for all $w \in W'_k$.

\subparagraph*{Replace $\model_\ell$ with $\model_k$ connecting $w_k$ to $w_\ell$,}
where $k < \ell$, $w_k \in W_k$, $w_\ell \in W_\ell$ and
there is an immersion $\simvar\colon W_k \to W_\ell$ such that $\simvar(w_k) = w_\ell$.
The transformation is defined as the previous one.

\begin{lemma} \label{lem:transformations}
  The result of any previous transformation is a stratified model such that
  $\Sigma_{\model}(w) = \Sigma_{\model'}({\pi(w)})$ for any $w\in W'$.
\end{lemma}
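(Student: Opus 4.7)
The plan is to verify, for each of the four transformations in turn, (i) that the resulting $\model'=(W',\peq',\msuccfct',V')$ is a stratified model and (ii) that $\Sigma_{\model'}(w)=\Sigma_\model(\pi(w))$ for every $w\in W'$. Part~(i) is largely bookkeeping: the partition of $W'$ into strata is read off the construction; tree-likeness of each stratum is either inherited from the corresponding stratum of $\model$ or guaranteed by the normalized (pointed) tree $\T$ supplied by Proposition~\ref{PropBound} or~\ref{PropBoundPointed}; monotonicity of $V'$ follows from the label-preservation of $\srone,\srtwo$ or $\simvar$; and confluence of $\msuccfct'$ reduces at the unique splice level $k$ to the monotonicity of the same maps, while elsewhere it is inherited from $\model$.

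For~(ii) I would proceed by induction on the number of connectives of $\psi\in\Sigma$. The cases $p,\bot,\wedge,\vee$ reduce immediately to the built-in equality $V'(w)\cap\Var=V(\pi(w))\cap\Var$ together with the inductive hypothesis. For the temporal cases the key identity is $\Sigma_\model(\pi(\msuccfct'(w)))=\Sigma_\model(\msuccfct(\pi(w)))$ for all $w\in W'$. For transformations~1 and~2 this is immediate from the definition of $\msuccfct'$ together with $\srone\srtwo=\mathrm{id}$ and the label-preservation of the condensation. For transformations~3 and~4 it follows by noting that for $w\in W'_k=W_k$ the $\msuccfct'$-trajectory from $w$ and the $\msuccfct$-trajectory from $\simvar(w)=\pi(w)$ coincide from step~1 onwards; the initial points $w$ and $\simvar(w)$ carry the same $\Sigma$-label, which suffices for $\tnext\varphi$ directly and, via the observation that $\diam\varphi$ (resp.\ $\ubox\varphi$) belongs to $\Sigma(w)$ iff it belongs to $\Sigma(\simvar(w))$, also for $\diam\varphi$ and $\ubox\varphi$ after collapsing the initial segment of the $\msuccfct$-trajectory that is removed in $\model'$.

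The implication case is the delicate one, and is where I expect the main obstacle to lie. For $\varphi\to\psi\in\Sigma$ and $w\in W'$ the ``$\Leftarrow$'' direction is routine because whenever $w\peq' u'$ in $\model'$ the monotonicity of $\srtwo$ or $\simvar$ yields $\pi(w)\peq\pi(u')$ in $\model$, so any $\model$-refutation above $\pi(w)$ descends to $\model'$ through the inductive hypothesis. For the ``$\Rightarrow$'' direction I must, given a refuting $v$ with $\pi(w)\peq v$ in $\model$, produce $u'$ with $w\peq' u'$ in $\model'$ whose $\Sigma$-label matches that of $v$. In transformations~1 and~2 the map $\srone$ supplies this lifting directly: taking $u'=\srone(v)$ gives $w=\srone(\srtwo(w))\peq'\srone(v)$ by monotonicity of $\srone$, and $\Sigma(\srtwo(\srone(v)))=\Sigma(v)$ by the label-preservation of $\srone$ and $\srtwo$, so the inductive hypothesis transfers the refutation. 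In transformations~3 and~4 no lifting is needed: because strata are closed under $\peq$, the $\peq$-upset of $w$ lies entirely within a single stratum, which is preserved by the transformation, so the universes of quantification in $\model$ and $\model'$ coincide (up to the label-preserving map $\simvar$ at level~$k$, for which $\varphi\to\psi\in\Sigma(w)\iff\varphi\to\psi\in\Sigma(\simvar(w))$), and the inductive hypothesis applies pointwise. Combining the four verifications then yields Lemma~\ref{lem:transformations}.
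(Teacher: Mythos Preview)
Your outline is correct and follows the same strategy as the paper's proof: leave the verification that $\model'$ is a stratified model as routine, and prove $\Sigma_{\model'}(w)=\Sigma_{\model}(\pi(w))$ by structural induction on formulas, handling implication via the condensation $(\srone,\srtwo)$ (surjectivity of $\srone$ for the lifting direction) in transformations~1--2 and via the stratum-preservation together with label-preservation of $\simvar$ in transformations~3--4. The paper is in fact much terser than you: it spells out only the $\tnext$-case at the splice level for transformation~1 and defers the remaining temporal cases to the proof of Lemma~\ref{LemFinSigma} and the implication case to the proof of Proposition~\ref{prop:IL}.

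One small point to tighten. Your ``key identity'' $\Sigma_\model(\pi(\msuccfct'(w)))=\Sigma_\model(\msuccfct(\pi(w)))$ is correct at every $w\in W'$, but it does \emph{not} iterate to $\Sigma_\model(\pi((\msuccfct')^n(w)))=\Sigma_\model(\msuccfct^n(\pi(w)))$: two points with the same $\Sigma$-label need not have $\msuccfct$-successors with the same $\Sigma$-label. So the identity alone settles $\tnext\varphi$ but not $\diam\varphi$ or $\ubox\varphi$ for transformations~1--2. The missing step is exactly the one you already use for transformations~3--4: since $\diam\varphi$ (resp.\ $\ubox\varphi$) itself lies in $\Sigma$, its truth value is recorded in the label at the unique splice level, so the eventuality is transferred across the single point where the $\pi$-image orbit deviates from the $\msuccfct$-orbit of $\pi(w)$. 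Make this explicit for transformations~1--2 as well and the argument is complete.
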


\begin{proof}
  The proof that $\modelbase[']$ is a model is straighforward and left to the reader.
  We prove by structural induction on $\varphi$ that
  for all transformations,
  all $w \in W'$ and all $\varphi \in \Sigma$,
  $\model', w \sat \varphi$ iff $\model, \pi(w) \sat \varphi$.
  We only detail the case for the next modality when $\model_k$ is replaced with a copy of the
  normalized $\Sigma$-quasimodel $\T$ of $\model_k$
  and $w \in W'_{k-1}$.
  The cases for the other temporal modalities are similar
  (see also the proof of Lemma~\ref{LemFinSigma}).
  The cases for the implication are similar as in the proof of Proposition~\ref{prop:IL}.
  The remaining cases are straighforward.
  Suppose that $w \in W'_{k-1}$ and $\model, \pi(w) \sat \tnext \psi$.
  Then $\psi \in \Sigma(\msuccfct(w))$.
  Since $\msuccfct'(w) = \srone(\msuccfct(w))$, $\pi(\msuccfct'(w)) = \srtwo(\msuccfct'(w))$
  and $(\srone,\srtwo)$ is a condensation,
  $\Sigma(\msuccfct(w)) = \labfun_\T(\msuccfct'(w)) = \Sigma(\pi(\msuccfct'(w)))$
  and $\model, \pi(\msuccfct'(w)) \sat \varphi$.
  By induction hypothesis, $\model', \msuccfct'(w) \sat \varphi$,
  hence $\model', w \sat \tnext \varphi$.
  The other direction is similar.
\end{proof}

Now, let us consider a stratified model $\modelbase$ with $w_0$ the root of $W_0$.
The finite model $\modelbase[^{\rm fin}]$ with a state $w_0^{\rm fin}$ such that $\wsf{w_0^{\rm fin}} = \wsf{w_0}$
is constructed by the following procedure.
This procedure is in three phases plus a final step.
At each step, the current model $\modelbase$, initialized to $\model$, is modified.
Moreover, three index variables are maintained by the procedure:
\begin{itemize}
  \item The variable $i$, initialized to $0$, indicates the current labeled trees $W_i$ which
        is considered.
  \item The variable $j$, initialy undefined, indicates the index of the first labeled trees
        occuring infinitely often up to bimersion.
  \item The variable $\ell$, initialy undefined, holds the index of the last labeled tree
        that must not be modified.
\end{itemize}
As an invariant, $\model$ is stratified until the final step
and for all $k < i$, $\model_k$ is a copy of a normalized labeled tree.

\subparagraph*{First phase.}
\begin{itemize}

\item If there is $k < i$ such that $\model_k \imm \model_i$, replace $\model_i$ with $\model_k$, set $i$ to $k+1$ and redo the same phase.

\item If not, and for all $x > i$ there is $y > x$ such that $\model_y\imm \model_i$, 
then replace $\model_i$ with a copy of its normalized $\Sigma$-quasimodel, increase $i$ by one, set $j$ and $\ell$ to $i$ and start the next phase.

\item Otherwise, replace $\model_i$ with its normalized $\Sigma$-quasimodel, increase $i$ by one and redo the same phase.

\end{itemize}

\subparagraph*{Second phase.}
In this phase, we need to care about eventualities.
To this end, a current eventuality $(w, \psi)$, initialy undefined, is maintained across the
executions of the phase. Let $w_x$ denote the element of the fulfillment of $(w, \psi)$ belonging to $W_x$ (if it exists), and $\model^+_x$ be the pointed structure $\model_x^{w_x}$.
The phase proceeds through the following steps:
\begin{itemize}
  \item If $(w,\psi)$ is defined and
        the last element of the fulfillment of $(w, \psi)$ belongs to some $W_k$ with $k \le i$
        then undefine $(w, \psi)$, set $\ell$ to $i$ and repeat the same phase.
  \item If $(w, \psi)$ is undefined
        then choose an eventuality $(w, \psi)$ such that $w \in W_j$ and
        the last element of its fulfillment belongs to some $W_k$ with $k > i$.
        If there is no such eventuality then start the next phase.
  \item If $(w, \psi)$ is defined
        and there is $k$ such that $\ell < k < i$ and $\model^+_k\imm \model^+ _i$,
        then replace $\model_i$ with $\model_k$ connecting $w_k$ to $w_i$,
        set $i$ to $k+1$ and redo the same phase.

\item        Otherwise, replace $\model_i$ with a copy of the normalized labeled tree of $\model_k$ preserving $w_i$,
        increase $i$ and redo the same phase.
\end{itemize}

\subparagraph*{Third phase.}
\begin{itemize}
        
\item If $\model_i \imm \model_j$, then start the final step.

\item If there is $k$ such that $\ell < k < i$ and $\model_k\imm\model_i$,
then replace $\model_i$ with $\model_k$,
set $i$ to $k+1$ and redo the same phase.

\item 
Otherwise, replace $\model_i$ with a copy of its normalized $
\Sigma$-quasimodel,
increase $i$ by one and redo the same phase.

\end{itemize}

\newcommand{\Qfin}{^{\rm fin}}

\subparagraph*{Final step.}
There is an immersion $\simvar \colon W_i  \to W_j$.
Construct the final tuple\linebreak $(W\Qfin, \irel\Qfin, \msuccfct\Qfin, V\Qfin)$ such that $W\Qfin = \bigcup_{0 \le m < i} W_m $, $\irel\Qfin = \reduc{\irel}{W\Qfin}$,
\[
  \msuccfct\Qfin(w) = \begin{cases}
    \simvar\left(\msuccfct\left(w\right)\right) &\text{if } w \in W_{i-1} \\
    \msuccfct(w) &\text{otherwise}
  \end{cases}
  \]
$ V\Qfin = \reduc{V}{W\Qfin}$, and $w_0\Qfin$ is the root of $W_0$ (note that $w_0\Qfin \in W\Qfin$).

\begin{lemma}\label{LemFinSigma}
  The final tuple is a model and $\wsf{w_0\Qfin} = \wsf{w_0}$.
\end{lemma}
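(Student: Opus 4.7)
The plan is to split the argument into three parts: verifying that the procedure preserves the $\Sigma$-type of the root up to the final step, checking that $\model\Qfin$ is a model, and showing that the loop-closing step itself preserves $\Sigma$-satisfaction at every surviving world.

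For the first part I would induct on the steps of the three phases, noting that each modification of the current model is an instance of one of the transformations collected above Lemma~\ref{lem:transformations} and hence preserves the $\Sigma$-type of the root of $W_0$ via the composition of the maps $\pi$. Termination of the three phases follows from Propositions~\ref{PropBound} and~\ref{PropBoundPointed}: Phases~1 and~3 can only produce a bounded number of pairwise non-bimersive normalized tree-like $\Sigma$-quasimodels before either some previous level immerses into the current $\model_i$, forcing a contraction, or $\model_j$ does, starting the next phase or the final step; Phase~2 similarly processes only finitely many eventualities originating in $W_j$ before they are all fulfilled within $W_j \cup \cdots \cup W_i$. Writing $\model$ for the model just before the final step, we obtain $\Sigma_\model(w_0) = \wsf{w_0}$, where the right-hand side is computed in the original model.

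That $\model\Qfin$ is a model is mostly routine inheritance by restriction: $\irel\Qfin$ is a partial order and $V\Qfin$ is monotone. The only nontrivial point is forward confluence of $\msuccfct\Qfin$ at worlds $w \irel v$ in $W_{i-1}$, where confluence in $\model$ gives $\msuccfct(w) \irel \msuccfct(v)$ and, since $\simvar\colon W_i \to W_j$ is a function that is also an immersion and therefore order-preserving, $\simvar(\msuccfct(w)) \irel \simvar(\msuccfct(v))$ as required.

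The heart of the argument is then to prove by structural induction on $\psi \in \Sigma$ that $\model\Qfin, w \sat \psi$ iff $\model, w \sat \psi$ for every $w \in W\Qfin$. Propositional and implicational cases are immediate because $W\Qfin$ is $\irel$-closed and $V\Qfin$ agrees with $V$. For $\tnext \varphi$ the only subtle sub-case is $w \in W_{i-1}$: there one uses label-preservation of $\simvar$ to equate $\Sigma_\model(\msuccfct(w))$ with $\Sigma_\model(\simvar(\msuccfct(w)))$, and then invokes the induction hypothesis on $\varphi$ at $\simvar(\msuccfct(w)) \in W_j$. The genuinely difficult cases are $\diam \varphi$ and $\ubox \varphi$, where the trajectory in $\model\Qfin$ is eventually periodic through $W_j, \ldots, W_{i-1}$ while in $\model$ it extends linearly beyond $W_{i-1}$. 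Here I would combine the Phase~3 invariant $\model_i \imm \model_j$ with the Phase~2 guarantee that every eventuality originating in $W_j$ is fulfilled within $W_j \cup \cdots \cup W_i$: a witness for $\diam \varphi$ in $\model$ that lies beyond level~$i$ can be reflected through $\simvar$ onto a $W_j$-world of matching $\Sigma$-label and then followed along the loop to an element whose $\Sigma$-label is that of the original witness. The analogous but heavier bookkeeping for $\ubox$ counter-witnesses, which must be matched across arbitrarily many traversals of the loop, is the step I expect to be the main obstacle.
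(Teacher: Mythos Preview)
Your plan is correct and matches the paper's approach closely. For the step you flag as the main obstacle, the paper treats $\diam$ and $\ubox$ uniformly via eventualities (so the $\ubox$ case is not heavier than the $\diam$ case), and dispatches the ``arbitrarily many traversals'' direction by a short subinduction on the number of indices in the $\model\Qfin$-fulfillment that land in $W_j$, peeling off one loop at a time using that $\sigma$ is a label-preserving immersion.
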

\begin{proof}
  The proof that $\modelbase[\Qfin]$ is a model is straightforward and left to the reader.
  We prove by structural induction on $\varphi$ that
  for all $w \in W\Qfin$ and all $\varphi \in \Sigma$,
  $\model\Qfin, w \sat \varphi$ iff $\model, w \sat \varphi$.
  The cases for propositional variables and the boolean connectives are straightforward.
  The case for the next temporal modality is similar as in the proof of
  Lemma~\ref{lem:transformations}.
  For the eventually and henceforth temporal modalities,
  suppose first that $(w, \varphi)$ is an eventuality in $\model$ and $w \in W\Qfin$.
  Let $w_0 \ldots w_n$ be the fulfillment of $(w, \varphi)$ in~$\model$.
  If $w_n \in W\Qfin$ then by induction hypothesis, $(w,\varphi)$ is an eventuality in $\model\Qfin$.
  Otherwise, there is $k \le n$ such that $w_k \in W_i$.
  Therefore, $(w_k, \varphi)$ is an eventuality in $\model$ and so is $(\sigma(w_k), \varphi)$.
  Since by construction, after the second phase, 
  the length of the fulfillment of any eventuality $(v, \varphi)$ such that $v \in W_j$
  is bounded by $1 + i - j$,
  $(w, \varphi)$ is an eventuality in $\model\Qfin$.
  Conversely, suppose now that $(w, \varphi)$ is an eventuality in $\model\Qfin$ and
  let $w_0 \ldots w_n$ be its fulfillment.
  For each $k \le n$ let $m_k$ be such that $w_k \in W_{m_k}$.
  The proof is by a subinduction on the number $r$ of $k \in 1 \dts n$ such that $m_k = j$.
  If $r = 0$ then by induction hypothesis, $(w, \varphi)$ is is an eventuality in $\model$.
  If $r > 0$, let $k > 0$ be the least index such that $m_k = j$.
  If $k = n$ then suppose that $\varphi = \diam\psi$, the other case beeing symmetric.
  When have $\model\Qfin, w_k \sat \psi$ and by induction $\model, w_k \sat \psi$.
  Since $k > 0$, $w_k = S\Qfin(w_{k-1}) = \sigma(S(w_{k-1}))$ and
  since $\sigma$ is an immersion, $\model, S(w_{k-1}) \sat \psi$.
  Therefore $(w, \varphi)$ is an eventuality in $\model$.
  Finally, if $r > 0$ and $k < n$
  then $(w_k, \varphi)$ is an eventuality in $\model\Qfin$ and
  by the subinduction hypothesis $(w_k, \varphi)$ is an eventuality in $\model$.
  Since $k > 0$, $w_k = S\Qfin(w_{k-1}) = \sigma(S(w_{k-1}))$.
  Morevoer, since $\sigma$ is an immersion, $(S(w_{k-1}), \varphi)$ is an eventuality in $\model$.
  Hence $(w, \varphi)$ is an eventuality in~$\model$.
\end{proof}

\begin{lemma}\label{LemmFinalBound}
  The cardinality of $W\Qfin$ is bounded by
\[
    B(s) \eqdef \nrq{2^{s+1}}{s+3}\left(2 \nre{2^s}{s+1} + s \nrq{2^s}{s+1} \nre{2^{s+1}}{s + 3}\right)
  \]
  where $s = \card\Sigma$.
\end{lemma}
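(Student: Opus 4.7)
My plan is to estimate $|W^{\rm fin}|$ as the product of two quantities: the number of layers $W_k$ that survive in $\model^{\rm fin}$ (namely the value of $i$ when the final step begins) and the maximum cardinality of any single layer. Multiplying the two bounds will yield exactly $B(s)$.

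The per-layer bound is the easier half. By the loop invariant of the procedure, every committed $\model_k$ is, up to isomorphism, either a normalized $\Sigma$-quasimodel (produced in phases~1 and~3) or a normalized pointed $\Sigma$-quasimodel (produced in phase~2). Propositions~\ref{PropBound} and~\ref{PropBoundPointed} cap these sizes at $\nrq{2^s}{s+1}$ and $\nrq{2^{s+1}}{s+3}$ respectively, and since the latter dominates, every layer contains at most $\nrq{2^{s+1}}{s+3}$ states. This yields the leading factor of $B(s)$.

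Bounding the total number of layers is where most of the work lies, and I would split the analysis across the three phases. For phase~1, the key observation is that at termination the sequence $\model_0, \ldots, \model_{j-1}$ has the property that no pair of indices $a < b$ admits an immersion $\model_a \imm \model_b$, since otherwise the first clause would already have fired and triggered a truncation. Applying the Kruskal-style Proposition~\ref{PropBound}(2) to this sequence of normalized tree-like quasimodels shows its length is at most $\nre{2^s}{s+1}$: a longer sequence would force $\model_a \simm \model_b$ and hence $\model_a \imm \model_b$ for some $a<b$. Phase~3 runs a formally identical loop on the window $(\ell, i)$ and contributes at most another $\nre{2^s}{s+1}$ layers. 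For phase~2, there are at most $|W_j| \cdot s \leq \nrq{2^s}{s+1} \cdot s$ eventualities to process, because $|W_j|$ is already bounded and each state carries at most $s$ eventualities (one per $\diam$- or $\ubox$-subformula). Within the work for a single eventuality the same non-immersion invariant holds for the pointed quasimodels $\model^+_k$ in the current window, so Proposition~\ref{PropBoundPointed}(2) bounds the number of pointed layers added per eventuality by $\nre{2^{s+1}}{s+3}$. Summing, phase~2 contributes at most $s \cdot \nrq{2^s}{s+1} \cdot \nre{2^{s+1}}{s+3}$ layers.

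Adding the three phase-contributions yields $2\nre{2^s}{s+1} + s \cdot \nrq{2^s}{s+1} \cdot \nre{2^{s+1}}{s+3}$, and multiplying by the per-layer bound gives exactly $B(s)$. The most delicate point will be phase~2: one must verify that the running window $\ell < k < i$ always isolates precisely those pointed quasimodels committed while processing the current eventuality, so that Proposition~\ref{PropBoundPointed}(2) can be applied cleanly both while iterating on a single eventuality and across the reset of $\ell$ that occurs when an eventuality is dismissed. The remaining arithmetic is straightforward.
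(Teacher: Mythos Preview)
Your proposal is correct and follows essentially the same approach as the paper's own proof: bound each layer's size by $\nrq{2^{s+1}}{s+3}$ via Propositions~\ref{PropBound} and~\ref{PropBoundPointed}, then bound the number $i$ of layers by summing the contributions of the three phases ($\nre{2^s}{s+1}$ from phase~1, $s\,\nrq{2^s}{s+1}\,\nre{2^{s+1}}{s+3}$ from phase~2, and $\nre{2^s}{s+1}$ from phase~3), invoking the Kruskal-type items of those propositions exactly as you describe. The paper's argument is slightly terser but structurally identical, including the use of $\card{W_j}\le \nrq{2^s}{s+1}$ to cap the number of eventualities processed in phase~2.
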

\begin{proof}
  Let us consider the stratified model $\modelbase$ obtained after the third phase.
  For all $k < i$, $W_k$ is a copy either of a normalized  $\Sigma$-quasimodel
  or of a pointed normalized $\Sigma$-quasimodel.
  By Propositions \ref{PropBound} and~\ref{PropBoundPointed},
  for all $k < i$, $\card{W_k} \le \nrq{2^{s+1}}{s+3}$.
  We prove now that
  \[i \le 2 \nre{2^s}{s+1} + s \nrq{2^s}{s+1} \nre{2^{s+1}}{s + 3}.\]
  After the first phase, by Proposition~\ref{PropBound},
  we have $j \le \nre{2^s}{s+1}$ and $\card{W_j} \le \nrq{2^s}{s+1}$.
  Therefore, during the second phase,
  the current eventuality is defined at most $s\nrq{2^s}{s+1}$ times.
  Moreover, each time the current eventuality is undefined,
  by Proposition~\ref{PropBoundPointed} we have that $i - \ell \le \nre{2^{n+1}}{n+3}$.
  Therefore, when the second phase terminates,
  \[\ell - j \le s \nrq{2^s}{s+1} \nre{2^{s+1}}{s + 3}.\]
  Finally, after the third phase, by Proposition~\ref{PropBound}, $i - \ell \le \nre{2^s}{s+1}$.
\end{proof}

We have proved the following strong finite model property.

\begin{theorem} \label{ThmFMP}
  There exists a computable function $B$ such that
  for any formula $\varphi \in \lang$,
  if $\varphi$ is satisfiable (resp. unsatisfiable)
  then $\varphi$ is satisfiable (resp. falsifiable) in a model $\modelbase$
  such that $\card W \le B(\nos\varphi)$.
\end{theorem}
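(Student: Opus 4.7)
The plan is to read off Theorem \ref{ThmFMP} as an immediate consequence of the three-phase construction together with Lemmas \ref{LemFinSigma} and \ref{LemmFinalBound}. I would take the bounding function to be $B(n) = \nrq{2^{n+1}}{n+3}\left(2\nre{2^n}{n+1} + n\,\nrq{2^n}{n+1}\nre{2^{n+1}}{n+3}\right)$, i.e.~the quantity in Lemma \ref{LemmFinalBound} with $s$ replaced by $n$. This is manifestly computable from $n$, since $\nre{\cdot}{\cdot}$ and $\nrq{\cdot}{\cdot}$ are defined by primitive recursion.

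For the satisfiable direction, suppose $\model, w \sat \varphi$ for some intuitionistic dynamic model $\model$ and world $w$. By Theorem \ref{TheoStrat} we may replace $\model$ by an expanding (hence stratified) model, and by passing to the submodel generated by the distinguished world we may take $w$ to be the root $w_0$ of $W_0$. Set $\Sigma = \sff[\varphi]$, so that $|\Sigma| = \nos\varphi$, and run the three-phase procedure of Section \ref{SecDec} on $\model$ with this $\Sigma$. Lemma \ref{LemFinSigma} then produces a finite model $\model^{\rm fin}$ with a distinguished world $w_0^{\rm fin}$ satisfying $\wsf{w_0^{\rm fin}} = \wsf{w_0}$, and in particular $\model^{\rm fin}, w_0^{\rm fin} \sat \varphi$; Lemma \ref{LemmFinalBound} furnishes the bound $|W^{\rm fin}| \le B(\nos\varphi)$. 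The falsifiability case is entirely symmetric: starting from $\model, w \nsat \varphi$, the same reduction and procedure yield $\model^{\rm fin}, w_0^{\rm fin} \nsat \varphi$ from the same identity of $\Sigma$-types.

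The one thing this plan genuinely has to verify is that the three-phase procedure halts at all, as Lemmas \ref{LemFinSigma} and \ref{LemmFinalBound} tacitly presuppose termination. The first phase terminates because, by Proposition \ref{PropBound}, normalized $\Sigma$-quasimodels fall into at most $\nre{2^s}{s+1}$ bimersion classes, so within that many increments of $i$ one of the phase's three branches must fire. The second phase enumerates at most $|\Sigma|\cdot|W_j|$ eventualities rooted in $W_j$ (both quantities bounded in $s$), and by the pointed bound of Proposition \ref{PropBoundPointed} each such eventuality is resolved within $\nre{2^{s+1}}{s+3}$ further strata before a pointed normalized quasimodel must repeat. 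The third phase terminates by the same argument as the first. The main obstacle, beyond these termination counts, is maintaining throughout the procedure the invariant that each $W_k$ with $k < i$ is a copy of a normalized $\Sigma$-quasimodel or pointed $\Sigma$-quasimodel, so that the size estimate of Lemma \ref{LemmFinalBound} is applicable at the final step; this invariant is preserved by inspection of each of the four replacement operations listed in Lemma \ref{lem:transformations}.
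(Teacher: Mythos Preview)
Your proposal is correct and follows essentially the same route as the paper: invoke Theorem~\ref{TheoStrat} to pass to a stratified model, apply the three-phase construction, and read off the conclusion from Lemmas~\ref{LemFinSigma} and~\ref{LemmFinalBound}. Your explicit termination argument is a welcome addition that the paper leaves implicit in the counting behind Lemma~\ref{LemmFinalBound}; the step of ``passing to the submodel generated by the distinguished world'' is unnecessary, since the construction in Section~\ref{SubsStrat} already places the satisfying (resp.\ falsifying) world at the root of $W_0$.
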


\fullproof{
\begin{proof}
  In view of Theorem~\ref{TheoStrat},
  a formula $\varphi$ is satisfiable (resp. falsifiable) in a model $\model$ if and only if
  it is satisfied (resp. falsified) at the root of a stratified model $\model\Qstrat$.
  Then, by Lemma~\ref{LemFinSigma}, $\varphi$ is satisfied (resp. falsified) in $\model\Qstrat$ if and only if
  it is satisfied (res. falsified) on $(\model\Qstrat)\Qfin$,
  which is effectively bounded by $B(\nos\varphi)$ by Lemma \ref{LemmFinalBound}.
\end{proof}
}
\shortproof{}

As a corollary, we get the decidability of \iltl.

\begin{corollary}
  The satisfiability and validity problems for $\iltl$ are decidable.
\end{corollary}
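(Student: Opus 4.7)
The plan is to reduce the question to Theorem~\ref{ThmFMP} together with effective model-checking on finite dynamic models. First I would observe that validity reduces to satisfiability, since $\varphi$ is valid if and only if $\neg\varphi$ is not satisfiable, so it suffices to give a decision procedure for satisfiability. Given an input formula $\varphi$, compute the bound $N = B(\nos\varphi)$ provided by Theorem~\ref{ThmFMP} and enumerate, up to isomorphism, all tuples $(W, {\peq}, \msuccfct, V)$ with $\card W \le N$, $\peq$ a partial order on $W$, $\msuccfct$ a function on $W$ satisfying the forward confluence condition~\eqref{EqConf}, and $V$ a monotone valuation assigning to each world a subset of the finitely many propositional variables that appear in $\varphi$. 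Each of these defining conditions is effectively checkable and there are only finitely many candidates, so by Theorem~\ref{ThmFMP} the formula $\varphi$ is satisfiable if and only if at least one enumerated candidate contains a world at which $\varphi$ is true.

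It therefore remains to verify that the relation $\model, w \sat \varphi$ is decidable when $\model$ is a finite dynamic model, which I would prove by a straightforward structural induction on $\varphi$. The only cases worth commenting on are the temporal modalities, whose semantics ostensibly quantify over the infinite orbit $w, \msuccfct(w), \msuccfct^2(w), \ldots$; but since $W$ is finite, this orbit is eventually periodic and contains at most $\card W$ distinct states, which can be computed explicitly by iterating $\msuccfct$ until a repetition occurs. Then $\model, w \sat \diam \psi$ iff some state in the orbit satisfies $\psi$ (decidable by the induction hypothesis), and dually $\model, w \sat \ubox \psi$ iff every state in the orbit does. Intuitionistic implication is handled by the finite quantification over the $\peq$-successors of $w$, which is again a finite check. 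There is no genuine obstacle to this argument, since Theorem~\ref{ThmFMP} already supplies the substantive content: the corollary amounts only to combining the finite bound with a finite enumeration and a finite model-checking routine.
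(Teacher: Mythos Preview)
Your reduction of validity to satisfiability is incorrect. You claim that $\varphi$ is valid if and only if $\neg\varphi$ is not satisfiable, but this fails intuitionistically: take $\varphi = p \vee \neg p$. This formula is not valid (it fails at the root of the two-point poset with $p$ true only at the top), yet $\neg(p\vee\neg p)$ is unsatisfiable, since any world satisfying it would have no $\peq$-successor where $p$ holds and also no successor where $\neg p$ holds, which is impossible. In intuitionistic Kripke semantics, ``$\neg\varphi$ unsatisfiable'' only says that every world has some $\peq$-successor satisfying $\varphi$, which is strictly weaker than validity of $\varphi$.

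The fix is immediate and already built into Theorem~\ref{ThmFMP}: the theorem (see its proof) gives the bounded model property not just for satisfiability but also for \emph{falsifiability}. Hence $\varphi$ is valid iff it is not falsifiable iff it is not falsified in any model of size at most $B(\nos\varphi)$, and your enumeration-plus-model-checking routine then decides validity directly, without passing through $\neg\varphi$. With this correction, the remainder of your argument---the finite enumeration of candidate models over the variables of $\varphi$, and the observation that model-checking on a finite dynamic model is effective because orbits under $\msuccfct$ are eventually periodic---is correct and is exactly the standard unpacking of ``strong finite model property implies decidability'' that the paper leaves implicit.
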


\section{Conclusion} \label{SecConclusion}

We have introduced $\iltl$, an intuitionistic analogue of $\sf LTL$ based on expanding domain models from modal logic. In the literature, intuitionistic modal logic is typically interpreted over persistent models, but as we have shown this interpretation has the technical disadvantage of not enjoying the finite model property. Of course, this fact alone does not imply that $\itlb$ is undecidable, and whether the latter is true remains an  open problem. Meanwhile, our semantics are natural in the sense that we impose the minimal conditions on $\msuccfct$ so that any formula is true on an upwards-closed set under $\peq$, and a wider class of models is convenient as they can more easily be tailored for specific applications.

This is an exploratory work, being the first to consider the logic $\iltl$. As can be gathered from
the tools we have developed, understanding this logic poses many technical challenges, and many
interesting questions remain open. Perhaps the most pressing is the complexity of validity and
satisfiability: the decision procedure we have given is non-elementary, but there seems to be little
reason to assume that this is optimal. It may be possible to further `trim' the model $\model\Qfin$
to obtain one that is elementarily bounded. However, we should not expect polynomially bounded models, as $\iltl$ is conservative over intuitionistic propositional logic, which is already {\sc PSpace}-complete. Finally, we leave open the problem of finding a sound and complete axiomatization for $\iltl$.

\subsection*{Acknowledgements}

This research was partially supported by ANR-11-LABX-0040-CIMI within the program ANR-11-IDEX-0002-02.



\begin{thebibliography}{10}

\bibitem{FarinasDieguez}
J.{-}M. Alliot, M.~Di{\'{e}}guez, and L.~Fari{\~{n}}as del Cerro.
\newblock Metabolic pathways as temporal logic programs.
\newblock In {\em Logics in Artificial Intelligence - 15th European Conference,
  {JELIA} 2016, Larnaca, Cyprus, November 9-11, 2016, Proceedings}, pages
  3--17, 2016.

\bibitem{BalbianiDieguezJelia}
P.~Balbiani and M.~Di\'eguez.
\newblock Temporal here and there.
\newblock In M.~Loizos and A.~Kakas, editors, {\em Logics in Artificial
  Intelligence}, pages 81--96. Springer, 2016.

\bibitem{BV+2011}
G.~Boenn, M.~Brain, M.~De~vos, and J.~Ffitch.
\newblock Automatic music composition using answer set programming.
\newblock {\em Theory Pract. Log. Program.}, 11(2-3):397--427, 2011.

\bibitem{CabalarD14}
P.~Cabalar and M.~Di{\'{e}}guez.
\newblock {S}trong {E}quivalence of {N}on-{M}onotonic {T}emporal {T}heories.
\newblock In {\em Principles of Knowledge Representation and Reasoning:
  Proceedings of the 14\textsuperscript{th} International Conference, {KR'14},
  Vienna, Austria}, July 20-24, 2014.

\bibitem{CP07}
P.~Cabalar and G.~P\'erez.
\newblock {{T}emporal {E}quilibrium {L}ogic: {A} {F}irst {A}pproach}.
\newblock In {\em EUROCAST'07}, page 241–248, Las Palmas de Gran Canaria,
  Spain, 2007.

\bibitem{DVD86}
D.~Van Dalen.
\newblock Intuitionistic logic.
\newblock In {\em Handbook of Philosophical Logic}, volume 166, pages 225--339.
  Springer Netherlands, 1986.

\bibitem{Davies96}
R.~Davies.
\newblock A temporal-logic approach to binding-time analysis.
\newblock In {\em Proceedings, 11th Annual {IEEE} Symposium on Logic in
  Computer Science, New Brunswick, New Jersey, USA, July 27-30, 1996}, pages
  184--195, 1996.

\bibitem{Davoren2009}
J.~M. Davoren.
\newblock {On intuitionistic modal and tense logics and their classical
  companion logics: Topological semantics and bisimulations}.
\newblock {\em Annals of Pure and Applied Logic}, 161(3):349–367, 2009.

\bibitem{Ewald}
W.~B. Ewald.
\newblock Intuitionistic tense and modal logic.
\newblock {\em The Journal of Symbolic Logic}, 51(1):166--179, 1986.

\bibitem{CerroHS15}
L.~{Fari{\~{n}}as del Cerro}, A.~Herzig, and E.~Iraz Su.
\newblock Epistemic equilibrium logic.
\newblock In {\em IJCAI'15}, pages 2964--2970, Buenos Aires, Argentina, 2015.
  {AAAI} Press.

\bibitem{DFD2016}
D.~Fern{\'{a}}ndez-Duque.
\newblock The intuitionistic temporal logic of dynamical systems.
\newblock {\em arXiv}, 1611.06929 [math.LO], 2016.

\bibitem{mdml}
D.~M. Gabbay, A.~Kurucz, F.~Wolter, and M.~Zakharyaschev.
\newblock {\em {Many-Dimensional Modal Logics: Theory and Applications, Volume
  148 (Studies in Logic and the Foundations of Mathematics)}}.
\newblock North Holland, 1 edition, 2003.

\bibitem{pml}
D.~Gabelaia, A.~Kurucz, F.~Wolter, and M.~Zakharyaschev.
\newblock Non-primitive recursive decidability of products of modal logics with
  expanding domains.
\newblock {\em Annals of Pure and Applied Logic}, 142(1-3):245--268, 2006.

\bibitem{GG+10}
M.~Gebser, C.~Guziolowski, M.~Ivanchev, T.~Schaub, A.~Siegel, S.~Thiele, and
  P.~Veber.
\newblock Repair and prediction (under inconsistency) in large biological
  networks with answer set programming.
\newblock In {\em KR'10}, 2010.

\bibitem{Hey30}
A.~Heyting.
\newblock {\em Die formalen Regeln der intuitionistischen Logik}.
\newblock Sitzungsberichte der Preussischen Akademie der Wissenschaften.
  Physikalisch-mathematische Klasse. De{\"u}tsche Akademie der Wissenschaften
  zu Berlin, Mathematisch-Naturwissenschaftliche Klasse, 1930.

\bibitem{I13}
D.~Inclezan.
\newblock An application of {ASP} to the field of second language acquisition.
\newblock In {\em LPNMR'13}, pages 395--400, 2013.

\bibitem{KamideBounded}
N.~Kamide and H.~Wansing.
\newblock Combining linear-time temporal logic with constructiveness and
  paraconsistency.
\newblock {\em J. Applied Logic}, 8(1):33--61, 2010.

\bibitem{KojimaNext}
K.~Kojima and A.~Igarashi.
\newblock Constructive linear-time temporal logic: Proof systems and {K}ripke
  semantics.
\newblock {\em Information and Computation}, 209(12):1491 -- 1503, 2011.

\bibitem{Kruskal1960}
J.~B. Kruskal.
\newblock Well-quasi-ordering, the tree theorem, and vazsonyi's conjecture.
\newblock {\em Transactions of the American Mathematical Society},
  95(2):210--225, 1960.

\bibitem{LPV01}
V.~Lifschitz, D.~Pearce, and A.~Valverde.
\newblock {{S}trongly {E}quivalent {L}ogic {P}rograms}.
\newblock {\em ACM Transactions on Computational Logic}, 2(4):526--541, 2001.

\bibitem{MT99}
V.~Marek and M.~Truszczy{\'n}ski.
\newblock {\em {Stable models and an alternative logic programming paradigm}},
  pages 169--181.
\newblock Springer-Verlag, 1999.

\bibitem{MInt}
G.~Mints.
\newblock {\em A Short Introduction to Intuitionistic Logic}.
\newblock 2000.

\bibitem{Niemela99}
I.~Niemel{\"a}.
\newblock {{L}ogic {P}rograms with {S}table {M}odel {S}emantics as a
  {C}onstraint {P}rogramming {P}aradigm}.
\newblock {\em Annals of Mathematics and Artificial Intelligence},
  25(3-4):241--273, 1999.

\bibitem{NishimuraConstructivePDL}
H.~Nishimura.
\newblock Semantical analysis of constructive {P}{D}{L}.
\newblock {\em Publications of the Research Institute for Mathematical
  Sciences, Kyoto University}, 18:427--438, 1982.

\bibitem{NBGWB01}
M.~Nogueira, M.~Balduccini, M.~Gelfond, R.~Watson, and M.~Barry.
\newblock An {A}-{P}rolog decision support system for the space shuttle.
\newblock In {\em AAAI Spring Symposium}, 2001.

\bibitem{Pea96}
D.~Pearce.
\newblock {{A} {N}ew {L}ogical {C}haracterisation of {S}table {M}odels and
  {A}nswer {S}ets}.
\newblock In {\em {Proc. of Non-Monotonic Extensions of Logic Programming
  (NMELP'96)}}, pages 57--70, Bad Honnef, Germany, 1996.

\bibitem{PS86}
G.~Plotkin and C.~Stirling.
\newblock A framework for intuitionistic modal logics: Extended abstract.
\newblock In {\em Proceedings of the 1986 Conference on Theoretical Aspects of
  Reasoning About Knowledge}, TARK '86, pages 399--406, San Francisco, CA, USA,
  1986. Morgan Kaufmann Publishers Inc.

\bibitem{Simpson94}
A.~K. Simpson.
\newblock {\em The proof theory and semantics of intuitionistic modal logic}.
\newblock PhD thesis, University of Edinburgh, {UK}, 1994.

\end{thebibliography}
\end{document}